%
%
%
\documentclass{amsproc}

\newtheorem{theorem}{Theorem}[section]
\newtheorem{lemma}[theorem]{Lemma}
\newtheorem{prop}[theorem]{Proposition}
\newtheorem{cor}[theorem]{Corollary}

\theoremstyle{definition}
\newtheorem{definition}[theorem]{Definition}

\theoremstyle{remark}
\newtheorem{remark}[theorem]{Remark}

\numberwithin{equation}{section}



\DeclareMathOperator{\Min}{Min}

\DeclareMathOperator{\Harm}{Harm}

\DeclareMathOperator{\GL}{GL}
\DeclareMathOperator{\Aut}{Aut}
\DeclareMathOperator{\Sp}{Sp}

\DeclareMathOperator{\SL}{SL}

\newcommand{\disj}{\stackrel{.}{\cup}}
\newcommand{\Z}{{\mathbb{Z}}}
\newcommand{\Q}{{\mathbb{Q}}}
\newcommand{\F}{{\mathbb{F}}}
\newcommand{\N}{{\mathbb{N}}}
\newcommand{\R}{{\mathbb{R}}}
\newcommand{\A}{{\mathbb{A}}}
\newcommand{\E}{{\mathbb{E}}}
\newcommand{\D}{{\mathbb{D}}}
\newcommand{\HH}{{\mathbb{H}}}
\newcommand{\C}{{\mathbb{C}}}
\newcommand{\trace}{\mbox{trace}}

\begin{document}

\title{
Boris Venkov's Theory of Lattices and Spherical Designs}

\author{Gabriele Nebe}
\address{
Lehrstuhl D f\"ur Mathematik, RWTH Aachen University,
52056 Aachen, Germany}
\email{nebe@math.rwth-aachen.de}


\maketitle

\section{Introduction.}

Boris Venkov passed away on November 10 2011 just 5 days before his 77th birthday. 
His death overshadowed the conference
``Diophantine methods, lattices, and arithmetic theory of quadratic forms''
November 13-18 2011 at the BIRS in Banff (Canada), where his  
important contributions to the theory of lattices, 
modular forms and spherical designs played a central role. 
This article gives a short survey of the mathematical work of Boris Venkov in this direction.

Boris Venkov's first work on lattices was a new proof \cite{V1} of the classification of even unimodular 
lattices in dimension 24 in  1978 which appeared as Chapter 18 of 
the book ``Sphere packings, lattices and groups'' \cite{SPLAG}.
This was the first application of the theory of spherical designs to lattices shortly after 
their definition in the fundamental work \cite{DGS} by Delsarte, Goethals, and Seidel. 
In the same spirit a combination of the theory of spherical designs with modular
forms allowed Venkov to prove that all layers of extremal
even unimodular lattices form good spherical designs. 
Since then, lattices became an important tool for the construction and investigation 
of spherical designs (see for instance \cite{BMV} or Section \ref{tight}).
Boris Venkov's work on the 
connection between lattices and spherical designs 
finally led
him to the definition of strongly perfect lattices. His
lecture series in Bordeaux and Aachen on this topic (see \cite{stperf}) initiated many 
fruitful applications of this theory, some of which are collected in \cite{Mart}. 
Strongly perfect lattices provide interesting examples of 
locally densest lattice, so called extreme lattices. 
The definition of strongly perfect lattices was very 
successful, for instance it allows to apply the theory of modular forms to show that 
all extremal even unimodular lattices of dimension 32 
are extreme lattices.
It also permits to apply representation theory of the automorphism group to show that a lattice is extreme. 
The notion of strong perfection has been generalized to other
metric spaces like Grassmanians or Hermitian spaces and also
to coding theory.

Boris Venkov spent a great part of his mathematical life 
visiting other universities. When I asked him whether he has a complete
list of his visits for proposing him for his Humboldt Research Award 
in 2007, he answered
``A complete list of my visits would be too long. It contains also exotic
visits like Tata Institut in Bombay, Universidad Autonoma in Mexico
or Universidad de Habana, Kuba.''
He had visiting professorships in 
Bonn (1989), Cambridge (1989), Geneva (1992, and 3 months every year since 1997), 
Lyon (1993), Paris (1994), Aachen (1994, 1996, ff.), Berlin (1995/96), 
Grenoble (1995, 1998), Bordeaux (1997), Fukuoka (2002), Kyoto (2006). 
All groups enjoyed interesting and fruitful discussions and productive 
collaborations with Boris Venkov.
Some of the resulting articles are given in the references;
for a complete list of Boris Venkov's papers I refer to 
the Zentralblatt or MathSciNet.

\section{Lattices, designs and modular forms.} 

\subsection{Extreme lattices.}

A classical problem asks for the densest packing of equal spheres in Euclidean space. 
Already in dimension 3 this turned out to be very hard; in this generality it was solved
by T. Hales 1998 with a computer based proof of the Kepler conjecture. 
The sphere packing problem becomes easier, if one restricts to lattice 
sphere packings, where the centers of the spheres form a group. 
The density function has only finitely many local maxima on the 
space of similarity classes of $n$-dimensional lattices, the so called 
extreme lattices. Korkine and Zolotareff and later Voronoi developed 
methods to compute all extreme lattices of a given dimension. The necessary
definitions are given in this section. 
Details and proofs may be found in the textbook \cite{Martinet}.

We always work in 
Euclidean $n$-space $(\R^n,(,))$ where
$(x,y) = \sum_{i=1}^n x_iy_i$ is the standard inner product with associated
quadratic form $$Q: \R^n \to \R, Q(x):= \frac{1}{2} (x,x) =\frac{1}{2} \sum _{i=1}^n x_i^2 .$$  
 
\begin{definition} 
\begin{itemize}
\item[(a)] A {\bf lattice} is the $\Z $-span  $L=\langle b_1 ,\ldots , b_n \rangle _{\Z} = $ \\ $  \{ \sum _{i=1}^n a_i b_i \mid a_i\in \Z \}$ 
of a basis $B=(b_1,\ldots , b_n)$ of $\R^n$. 
\item[(b)] The {\bf determinant} of $L$ is the square of the covolume of $L$ in $\R^n$ and 
can be computed as the determinant of a {\bf Gram matrix} 
$\det(L) = \det ((b_i,b_j))_{i,j=1}^n$. 
\item[(c)] $L$ is called {\bf integral} if $(\ell , m ) \in \Z $ for all $\ell,m\in L$. 
\item[(d)] $L$ is called {\bf even} if $(\ell ,\ell ) \in 2\Z $ 
and hence $Q(\ell ) \in \Z$ for all $\ell \in L$.
\item[(e)] The {\bf minimum} of $L$ is  $\min(L) = \min \{ (x,x) \mid 0\neq x \in L \} .$
We denote the set of 
minimal vectors by $\Min (L) := \{ x\in L \mid (x,x) = \min (L) \} .$
\item[(f)] The sphere packing density of $L$ is then proportional to the 
{\bf Hermite function} $\gamma (L) := \frac{\min(L)}{\det(L)^{1/n}} $. 
\item[(g)] A {\bf similarity} of norm
 $\alpha \in \R_{>0}$ is an element
$\sigma \in \GL_n(\R) $ with $(\sigma(x),\sigma(y)) = \alpha (x,y)$ for all 
$x,y\in \R^n$.  
 Two lattices $L$ and $M$ are called {\bf similar}, if there is a similarity 
$\sigma $ with $\sigma(L) = M$.
\item[(h)] The Hermite function $\gamma $ is well defined on similarity classes of lattices.
A lattice $L$ is called {\bf extreme}, if its similarity class realises a local
maximum of $\gamma $.
\end{itemize}
\end{definition}

The definition of extreme lattices goes back to Korkine and Zolotareff in the 1870s. 
They showed that extreme lattices are perfect, 
where a lattice $L$ is {\bf perfect}, if the projections onto the minimal vectors span the
space of all symmetric endomorphisms, i.e. 
$$\langle x x^{tr} \mid x\in \Min (L) \rangle = \R^{n\times n}_{sym} .$$
30 years later Voronoi gave an algorithm to enumerate all finitely many
similarity classes of perfect lattices  in a given dimension.
In the study of perfect lattices we may always restrict to integral lattices by
the following result:

\begin{theorem}
Any perfect lattice $L$ is similar to some integral lattice.
\end{theorem}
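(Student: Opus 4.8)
The plan is to show that, after a suitable rescaling, the Gram matrix of $L$ becomes rational; clearing denominators by a further scaling then produces an integral lattice in the similarity class of $L$. Fix a basis $b_1,\dots,b_n$ of $L$, let $B$ be the matrix with columns $b_i$, and let $A=((b_i,b_j))$ be the corresponding Gram matrix, a symmetric positive definite real matrix; write $m=\min(L)$. Each minimal vector has the form $\sum_i c_ib_i$ for an integer coordinate vector $c\in\Z^n$, and satisfies $c^{tr}Ac=m$. Since $S\mapsto BSB^{tr}$ is a linear isomorphism of $\R^{n\times n}_{sym}$, the perfection hypothesis $\langle xx^{tr}\mid x\in\Min(L)\rangle=\R^{n\times n}_{sym}$ is equivalent to the statement that the rank-one \emph{integer} matrices $cc^{tr}$, as $c$ ranges over the coordinate vectors of $\Min(L)$, span $\R^{n\times n}_{sym}$.

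First I would extract a basis from these: choose minimal coordinate vectors $c_1,\dots,c_N$ with $N=n(n+1)/2$ such that $c_1c_1^{tr},\dots,c_Nc_N^{tr}$ form a basis of $\R^{n\times n}_{sym}$. Consider the evaluation map $T\colon\R^{n\times n}_{sym}\to\R^N$, $T(S)=(\tr(S\,c_ic_i^{tr}))_{i=1}^N=(c_i^{tr}Sc_i)_{i=1}^N$. Linear independence of the $c_ic_i^{tr}$ makes the $N$ functionals $S\mapsto\tr(S\,c_ic_i^{tr})$ linearly independent, so $T$ is an isomorphism; and since every $c_ic_i^{tr}$ has integer entries, the matrix of $T$ with respect to the standard rational basis $\{E_{ii}\}\cup\{E_{ij}+E_{ji}\}$ of $\R^{n\times n}_{sym}$ is an invertible integer matrix, whence $T^{-1}$ is defined over $\Q$.

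The key observation is that $T(A)=(c_i^{tr}Ac_i)_i=m\cdot(1,\dots,1)$, because each $c_i$ is minimal. Applying $T^{-1}$ gives $A=m\,T^{-1}(1,\dots,1)$, and since $T^{-1}$ is rational the matrix $Q:=T^{-1}(1,\dots,1)$ is a rational (positive definite) symmetric matrix. Thus $A=mQ$ is a positive real multiple of a rational matrix. Scaling the inner product by $1/m$, a similarity of norm $1/m$, replaces $A$ by the rational Gram matrix $Q$; multiplying by a common denominator $d$ of its entries, a similarity of norm $d$, yields the integral symmetric positive definite Gram matrix $dQ$. The lattice with this Gram matrix is integral and similar to $L$.

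I expect the crux to be the middle step: recognising that perfection supplies exactly $N=\dim\R^{n\times n}_{sym}$ linearly independent \emph{integral} linear conditions $c_i^{tr}Ac_i=m$ on the Gram matrix, so that $A$ is pinned down up to the single scaling degree of freedom carried by $m$. Once this rigidity is in place, the rationality of $Q$ and the passage to an integral representative are formal. The only points requiring care are that the chosen $c_i$ genuinely form a basis (guaranteed by the spanning hypothesis) and that positive definiteness survives the scalings (it does, since $m>0$ and $d>0$).
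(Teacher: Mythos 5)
Your proof is correct and follows essentially the same route as the paper: both exploit the fact that perfection makes the integer-coefficient linear system $c^{tr}Ac=m$ determine the Gram matrix uniquely up to the overall scale, forcing it to be a positive multiple of a rational matrix. The paper establishes uniqueness via orthogonality to $\langle xx^{tr}\rangle$ with respect to the trace form, while you make the same point explicit by extracting a basis of rank-one matrices and inverting an integer matrix; this is a presentational difference only.
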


\begin{proof}
Let $L$ be some perfect lattice of minimum 1. Choose some basis $B$ of $L$ and 
let $F:=((b_i,b_j) )_{i,j=1}^n \in \R^{n\times n}$ be the Gram matrix. 
Then $\Min (L) = \{ \sum _{i=1}^nx_ib_i \mid x \in \Z^n, x^{tr} F x = 1 \} $. 
View this as a system of linear equations with rational coefficients
 on the entries of the Gram matrix $F$: 
\begin{equation}\label{Eqaaa} 
 x^{tr} F x = 1 \mbox{ for all } \sum _{i=1}^nx_ib_i\in \Min (L) \end{equation}
We show that $F$ is the unique solution of this system: 
Let $G$ be a second solution of \eqref{Eqaaa}. Then
$$ 0 = x^{tr} (F-G) x = \trace (x^{tr} (F-G) x ) = \trace (x x^{tr} (F-G) ) \mbox{ for all } x\in \Min (L) $$
so $F-G\in \R^{n\times n}_{sym}$ is perpendicular to $\langle x x^{tr} \mid x\in \Min (L) \rangle $
 with respect to the positive definite symmetric bilinear form $(A,B)\mapsto \trace(AB)$.
Since $L$ is perfect this yields $F-G = 0$, so $F$ is uniquely determined by 
\eqref{Eqaaa}. 
All coefficients of \eqref{Eqaaa} are integers, so the solution
is rational,
 $F\in \Q ^{n\times n} $. 
Multiplying by the common denominator yields an integral lattice that
is similar to $L$.
\end{proof}

All perfect lattices are known up to dimension 8.
Due to the existence of the famous {\bf Leech lattice} $\Lambda _{24}$ 
we also know the absolutely densest lattice of dimension 24 by work of 
Elkies, Cohn and Kumar.

\begin{center}{\bf The densest lattices.} \\ \vspace{0.2cm} 
\begin{tabular}{|c|c|c|c|c|c|c|c|c|c|}
\hline
dimension & 1 & 2 & 3 & 4 & 5 & 6 & 7 & 8 & 24 \\
\hline
$\# $ perfect & 1 & 1 & 1 & 2 & 3 & 7 & 33 & 10916 &  \\
\hline
$\# $ extreme & 1 & 1 & 1 & 2 & 3 & 6 & 30 & 2408 &  \\
\hline
densest & $\A _1$ & $\A _2$ & $\A_3 $ & $\D_4 $ & $\D_5 $ & $\E_6 $ & $\E_7 $ &
$\E_8 $ & $\Lambda _{24} $ \\
\hline
\end{tabular}
\end{center}

For a perfect lattice to be a local maximum of the Hermite function an additional
convexity condition is needed:

\begin{definition}
A lattice $L$ is called 
{\bf eutactic}, 
 if there are $\lambda _x >0$ such that $I_n = \sum _{x\in \Min (L)} \lambda _x (xx^{tr}) $.
It is called 
{\bf strongly eutactic}, if all $\lambda _x$ can be chosen to be equal. 
\end{definition}

\begin{theorem}(\cite[Theorem 3.4.6]{Martinet}) 
A lattice $L$ is extreme if and only if it is perfect and eutactic. 
\end{theorem}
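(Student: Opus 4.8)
The plan is to analyse $\gamma$ along smooth deformations of $L$ and to recast the local-maximum condition as a convex-geometry statement in $\R^{n\times n}_{sym}$, equipped with the inner product $\langle A,B\rangle:=\trace(AB)$. Since $\gamma$ is invariant under similarities and under the orthogonal group, every lattice close to $L$ is, up to rotation and scaling, of the form $g_tL$ with $g_t=I_n+tS$ for symmetric $S$; the direction $S=I_n$ is the similarity direction and leaves $\gamma$ fixed, so only traceless $S$ are relevant. First I would expand to second order: writing $m:=\min(L)$ and $\langle S,vv^{tr}\rangle=v^{tr}Sv$, one has $(g_tv,g_tv)=m+2t\langle S,vv^{tr}\rangle+t^2\lvert Sv\rvert^2$ and $\det(g_tL)=\det(I_n+tS)^2\det(L)$. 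Because $\Min(L)$ is finite and separated in norm from the remaining lattice vectors, for small $t$ the short vectors of $g_tL$ all arise from $\Min(L)$, so for traceless $S$ the one-sided derivative of $\log\gamma$ at $t=0$ is $\phi(S):=\tfrac{2}{m}\min_{v\in\Min(L)}\langle S,vv^{tr}\rangle$.

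Next I would set up the convex dictionary for the point $c:=\tfrac{m}{n}I_n$ and the hull $K:=\mathrm{conv}\{vv^{tr}\mid v\in\Min(L)\}$, which lies in the affine hyperplane $\{\trace=m\}$ because $\trace(vv^{tr})=(v,v)=m$. Taking traces in the defining identities shows that $L$ is eutactic exactly when $c$ lies in the relative interior of $K$, and that $L$ is perfect exactly when $K$ is full-dimensional inside $\{\trace=m\}$; hence $L$ is perfect and eutactic precisely when $c$ is an interior point of $K$.

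For the direction from perfect and eutactic to extreme, if $c$ is interior to $K$ then every traceless $S\neq0$ satisfies $\min_{v}\langle S,vv^{tr}\rangle<0$ (note $\langle S,c\rangle=\tfrac{m}{n}\trace S=0$), so $\phi(S)<0$. By homogeneity and compactness of the unit sphere in the traceless subspace, $\phi$ is bounded above there by a negative constant, and since a neighbourhood of $L$ is governed by the finitely many vectors of $\Min(L)$ the error term is uniform. Thus $\gamma$ strictly decreases along every non-similarity direction, so $L$ is a strict local maximum and therefore extreme.

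For the converse, suppose $L$ is extreme. Then $\phi(S)\le0$ for every traceless $S$, which by the separating-hyperplane theorem is exactly the assertion $c\in K$, i.e. weak eutaxy. The hard part is to exclude the remaining possibility $c\in K\setminus\mathrm{int}\,K$. In that case there is a nonzero traceless $S$ with $\langle S,vv^{tr}\rangle\ge0$ for all $v\in\Min(L)$: a supporting functional when $c$ lies on the boundary, or a functional vanishing on the affine hull of $K$ when $K$ fails to be full-dimensional. Along such an $S$ every minimal vector obeys $(g_tv,g_tv)=m+2t\langle S,vv^{tr}\rangle+t^2\lvert Sv\rvert^2\ge m$ for small $t>0$, whence $\min(g_tL)\ge m$; meanwhile $\trace S=0$ annihilates the linear term of $\det(I_n+tS)$ and forces a negative quadratic term, so $\det(g_tL)=\det(I_n+tS)^2\det(L)<\det(L)$. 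Therefore $\gamma(g_tL)>\gamma(L)$, contradicting extremality, and $c$ must be interior to $K$, i.e. $L$ is perfect and eutactic. The genuine obstacle throughout is the non-smoothness of $\min$, which restricts us to one-sided derivatives and makes the second-order determinant computation, rather than a naive first-order test, the decisive ingredient in the necessity direction.
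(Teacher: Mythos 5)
The paper gives no proof of this theorem at all --- it only cites \cite[Theorem 3.4.6]{Martinet} --- and your argument is precisely the classical Voronoi proof that the cited reference presents: second-order expansion of $\min$ and $\det$ along $I_n+tS$, reduction of perfection-plus-eutaxy to the statement that $\frac{m}{n}I_n$ is an interior point of $\mathrm{conv}\{vv^{tr}\mid v\in\Min(L)\}$ inside the hyperplane $\{\trace = m\}$, separation for weak eutaxy, and the traceless-determinant trick ($\trace S=0$, $S\neq 0$ forces $\det(I_n+tS)<1$) for the necessity direction; it is correct. The only steps you gloss over are standard and harmless: the polytope fact that the relative interior of the convex hull of a finite set consists exactly of its strictly positive convex combinations, and the reduction of an arbitrary nearby lattice to the normal form $O\exp(s I_n)\exp(tS)L$ with $O$ orthogonal and $S$ traceless, which is needed to make ``only traceless $S$ are relevant'' uniform rather than merely directional.
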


Up to dimension 8, the densest lattices are similar to root lattices.

\begin{definition}
A lattice $L$ is called a {\bf root lattice}, if $L$ is even and 
$L = \langle \ell \in L \mid Q(\ell ) = 1 \rangle _{\Z }$.
\end{definition}

Any root lattice is a unique orthogonal sum of indecomposable root lattices. 
The orthogonally indecomposable root lattices are classified. They form 
two infinite series $\A_n$ ($n\geq 1$), $\D_n$ ($n\geq 4$) with three
exceptional lattices $\E_6$, $\E_7$, $\E_8$.
An important invariant attached to an indecomposable root lattice is
its {\bf Coxeter number} $h(L) := |\Min (L)| / \dim (L) $. 

$$
\begin{array}{|c|c|c|c|c|c|} 
\hline
 L & |\Min (L)| & h(L) & \det(L)  &  n \\
\hline
\A _n & n(n+1) & n+1 &  n+1  & \geq 1 \\
\D _n & 2 n (n-1) & 2(n-1) &  4 & \geq 4 \\
\E _6 & 72 & 12 &  3 &  6 \\
\E _7 & 126 & 18 &  2   & 7 \\
\E _8 & 240 & 30 &  1  &  8  \\
\hline
\end{array}
$$

Venkov's study of root lattices described in Section \ref{Nie} 
gave the first connection between Voronoi's characterisation of extreme lattices
and spherical designs. A guiding observation comes from the fact that 
indecomposable root lattices are strongly eutactic.

\subsection{Strongly eutactic lattices.} 

\begin{remark} \label{strongeut} 
A lattice $L$ is strongly eutactic 
if and only if there is some constant $c$ such that 
\begin{equation}\label{Eqstrongeut} 
\sum  _{x\in \Min (L)} (x,\alpha )^2 =  c (\alpha , \alpha ) 
 \mbox{ for all } \alpha \in \R^n
\end{equation}
Applying the Laplace operator
$\Delta _{\alpha }:= \sum_{i=1}^n\frac{\partial^2}{\partial _{\alpha _i}^2} $ to both sides of Equation \eqref{Eqstrongeut} one gets $c= \min(L) |\Min (L)| /n $.
\end{remark}

\begin{proof}
The equation \eqref{Eqstrongeut} reads as 
$$\sum _{x\in \Min (L)} \alpha ^{tr} x x^{tr} \alpha 
 = c \alpha ^{tr} \alpha $$ for all $\alpha \in \R^n$ 
and therefore is equivalent to $\sum _{x\in \Min (L)} x x^{tr}  = c I_n $.
\end{proof}

\begin{definition}\label{Harmt} 
The 
space of {\bf harmonic polynomials} of degree $t$ in $n$ variables is
$$\Harm _t := \{ p\in \R[x_1,\ldots x_n] \mid \deg (p ) = t  \mbox{ and } 
\sum _{i=1}^n \frac{\partial^2}{\partial x_i^2} p = 0 \} $$
So $\Harm _t$ is the kernel of the Laplace operator. 
\end{definition}

\begin{remark}\label{Harmsteut}
The harmonic polynomials of degree $2$ are linear combinations of 
\begin{equation} \label{EqDefpa} 
p_{\alpha } : x \mapsto (x,\alpha )^2 - \frac{1}{n} (x,x) (\alpha,\alpha ) \mbox{ for }
\alpha \in \R^n .
\end{equation}
So a lattice is strongly eutactic, if and only if 
$$\sum _{x\in \Min (L)} p(x) = 0 \mbox{ for all } p\in \Harm _2 .$$
\end{remark}

Root lattices are important in the classification of complex semisimple Lie algebras 
but also for the classification of finite reflection groups. 
Any root $\ell \in \Min (L)$ of a root lattice $L$ defines an automorphism, 
the {\bf reflection} along $\ell $
$$\sigma _{\ell }: L\to L : x \mapsto x - (x,\ell ) \ell ,$$
so $\sigma _{\ell } \in \Aut (L) = \{ \sigma \in O(\R^n) \mid \sigma (L ) = L \} $. 
The {\bf automorphism group} $\Aut (L)$ is a finite subgroup of $\GL_n(\R) $.

\begin{prop}\label{irredeut}
If $\Aut(L) $ is irreducible, then $L$ is strongly eutactic. 
\end{prop}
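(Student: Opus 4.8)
The plan is to use the characterization of strong eutaxy from Remark~\ref{strongeut}, namely that $L$ is strongly eutactic if and only if the symmetric endomorphism $S := \sum_{x \in \Min(L)} x x^{tr}$ is a scalar multiple of the identity $I_n$. So the entire task reduces to showing that irreducibility of $\Aut(L)$ forces $S$ to be scalar.

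First I would observe that $S$ commutes with every element of $\Aut(L)$. Indeed, for $\sigma \in \Aut(L) \subseteq O(\R^n)$, the map $\sigma$ permutes the set $\Min(L)$ of minimal vectors, since $\sigma$ preserves both $L$ and the inner product and hence fixes $\min(L)$. Writing the conjugate as
$$
\sigma S \sigma^{tr} = \sum_{x \in \Min(L)} (\sigma x)(\sigma x)^{tr} = \sum_{y \in \Min(L)} y\, y^{tr} = S,
$$
where the middle equality is the reindexing $y = \sigma x$ over the permuted set, and using $\sigma^{tr} = \sigma^{-1}$ for orthogonal $\sigma$, this gives $\sigma S = S \sigma$ for all $\sigma \in \Aut(L)$. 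The key step is therefore establishing that $\Min(L)$ is $\Aut(L)$-stable; this is immediate but is the conceptual heart of the argument.

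Next I would invoke Schur's lemma. The group $\Aut(L)$ acts irreducibly on $\R^n$ by hypothesis, and $S$ is a real endomorphism commuting with this action. Since $S$ is symmetric (a sum of symmetric rank-one projections $x x^{tr}$), it is diagonalisable over $\R$ with real eigenvalues. If $S$ had two distinct eigenvalues, its eigenspaces would be proper nonzero $\Aut(L)$-invariant subspaces, contradicting irreducibility over $\R$. Hence $S$ has a single eigenvalue $c$, so $S = c I_n$, which is exactly the condition for strong eutaxy.

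The main obstacle to watch for is the subtlety in applying Schur's lemma over $\R$ rather than $\C$: the usual statement gives a division algebra of commuting endomorphisms, which for a real irreducible representation may be $\R$, $\C$, or $\HH$. This is why I emphasise that $S$ is \emph{symmetric}: regardless of the commutant type, a symmetric real operator is orthogonally diagonalisable, and its real eigenspaces are genuine $\Aut(L)$-invariant real subspaces. Real irreducibility then forces these eigenspaces to be trivial or the whole space, pinning down a single eigenvalue. Thus the symmetry of $S$ lets me sidestep the complications of the real commutant entirely and conclude $S = c I_n$ directly.
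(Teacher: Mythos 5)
Your proof is correct and is essentially the same argument as the paper's: the paper phrases it via the $G$-invariant quadratic form $Q'(\alpha)=\sum_{x\in\Min(L)}(x,\alpha)^2$ being a scalar multiple of $Q$ by real irreducibility, which is exactly your statement that the symmetric operator $S=\sum_{x\in\Min(L)}xx^{tr}$ commuting with $\Aut(L)$ must be scalar. The only difference is that you supply the eigenspace argument justifying this (which the paper cites as a standard fact), correctly noting that symmetry of $S$ avoids any issue with the real commutant being $\C$ or $\HH$.
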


\begin{proof}
Let $G = \Aut (L)$. Since $G$ is real irreducible 
all 
$G$-invariant quadratic forms are scalar multiples of  
$Q = \frac{1}{2} \sum _{i=1}^n x_i^2$. 
Now $G$ permutes the vectors in $\Min (L) $ and so 
$$Q': \R^n \to \R , \alpha \mapsto \sum _{x\in \Min (L)} (x,\alpha )^2 $$ 
is a positive $G$-invariant quadratic form; so there is some $c \in \R _{>0}$
such that $Q' = c Q$. By Remark \ref{strongeut} this means that $L$ is strongly eutactic.
\end{proof}

\begin{cor}\label{rootsteut}
Let $L\leq \R^n$ be an indecomposable root lattice.
Then $\Aut (L)$ is irreducible and hence $L$ is strongly eutactic, so 
$\sum _{x\in \Min (L)} (x,\alpha )^2 = 2 h(L) (\alpha , \alpha ) $ 
for all $\alpha \in \R ^n$.
\end{cor}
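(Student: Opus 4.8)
The plan is to reduce to Proposition \ref{irredeut}, so the substantive task is to show that $\Aut(L)$ acts irreducibly on $\R^n$; the value of the constant then follows from a short Laplace-operator computation. Since $L$ is a root lattice it is even, so every nonzero vector has norm in $2\Z$, and as $L$ is generated by roots of norm $2$ we get $\min(L) = 2$ with $\Min(L)$ equal to the set of roots. In particular each $\ell \in \Min(L)$ gives a reflection $\sigma_\ell : x \mapsto x - (x,\ell)\ell \in \Aut(L)$, and these reflections are the key tool.

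To prove irreducibility I would argue by contradiction. Suppose $U \subsetneq \R^n$ is a nonzero $\Aut(L)$-invariant subspace. Because $\Aut(L) \le O(\R^n)$, the orthogonal complement $U^\perp$ is also invariant, giving a decomposition into nonzero invariant summands. The central observation is a dichotomy for roots: if $\ell \in \Min(L)$, then invariance of $U$ under $\sigma_\ell$ forces $(u,\ell)\ell = u - \sigma_\ell(u) \in U$ for every $u \in U$, so either $\ell \in U$ or $(u,\ell) = 0$ for all $u$, i.e. $\ell \in U^\perp$. Hence every root lies in $U$ or in $U^\perp$.

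Now I would exploit that $L = \langle \Min(L)\rangle_\Z$. Writing $\Min(L) = R_U \disj R_{U^\perp}$ according to the dichotomy, the roots in $R_U$ and those in $R_{U^\perp}$ are pairwise orthogonal, so $L$ splits as the orthogonal direct sum $\langle R_U\rangle_\Z \perp \langle R_{U^\perp}\rangle_\Z$ of two even sublattices, each generated by roots and hence again a root lattice. A dimension count shows both summands are nonzero: since the roots span $\R^n = U \oplus U^\perp$ while $R_U$ spans a subspace of $U$ and $R_{U^\perp}$ a subspace of $U^\perp$, emptiness of either set would leave the roots confined to a proper subspace. This contradicts the assumed indecomposability of $L$, so $\Aut(L)$ is irreducible, and Proposition \ref{irredeut} yields that $L$ is strongly eutactic.

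Finally, to pin down the constant I would invoke Remark \ref{strongeut}, which gives $\sum_{x\in\Min(L)}(x,\alpha)^2 = c\,(\alpha,\alpha)$ with $c = \min(L)\,|\Min(L)|/n$. Using $\min(L)=2$ and $h(L) = |\Min(L)|/n$ from the definition of the Coxeter number, this simplifies to $c = 2h(L)$, as claimed. The main obstacle is the irreducibility step; the delicate point there is justifying that both $R_U$ and $R_{U^\perp}$ are nonempty, which is precisely where the indecomposability hypothesis on $L$ enters.
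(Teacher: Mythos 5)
Your proof is correct and follows the same route the paper intends: reduce to Proposition \ref{irredeut} and read off the constant from Remark \ref{strongeut} using $\min(L)=2$ and $h(L)=|\Min(L)|/n$. The paper states the irreducibility of $\Aut(L)$ without proof, and your reflection argument (each root lies in an invariant subspace $U$ or in $U^{\perp}$, so a nontrivial invariant subspace would force an orthogonal splitting of $L$ contradicting indecomposability) is the standard, correct way to fill that gap.
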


\subsection{Extremal lattices.}

The notion of (analytic) extremality has first been defined for even unimodular lattices
and has then be generalized by Quebbemann \cite{Quebbemann} to modular lattices.
Roughly speaking, extremal lattices are lattices in some arithmetically defined family of
lattices for which the  density is as big as the  theory of modular forms allows it to be.
The idea is to translate arithmetic properties of the lattice $L$ into invariance 
conditions of its theta series $\theta _L(z)$ and to prove that $\theta _L (z)$ 
is some homogeneous element in a finitely generated graded ring (or module) of modular forms.
The knowledge of explicit generators then allows to derive \`a priori upper bounds on the
minimum of $L$. Details on this section can be found in the books  \cite{Ebeling}
and \cite{Serre}.

\begin{definition}
Let $L=\langle B \rangle _{\Z }\leq \R^n $ be a lattice.
\begin{itemize}
\item[(a)] The {\bf dual lattice} $L^{\#} := \{ x\in \R^n \mid (x,\ell ) \in \Z \mbox{ for all }
\ell \in L \} $ is the lattice spanned by the dual basis $B^*$. 
\item[(b)] $L$ is called {\bf unimodular} if $L=L^{\#}$.
\item[(c)] Let $L$ be an even lattice. 
Then the {\bf theta series} of $L$ is 
$$\theta _L := \sum _{\ell \in L} q^{Q(\ell )} = 1+\sum _{j=\min(L)}^{\infty }  a_j q^j $$
with $a_j = |A_j(L) |$ and $A_j(L) = \{ \ell \in L \mid Q(\ell ) = j \} $.
The substitution $q:=\exp(2\pi i z)$ then defines a holomorphic function
$\theta _L(z) = \sum _{j=0}^{\infty }  a_j \exp(2\pi i z)^j $ on the upper half plane 
$\HH := \{ z\in \C \mid \Im (z) > 0 \}$.
\end{itemize}
\end{definition}

In the following we will study {\bf even unimodular lattices}. They
correspond to positive definite regular integral quadratic forms $Q:L\to \Z $. 
The theory of quadratic forms shows that even unimodular lattices only 
exist, if the dimension $n$ is a multiple of $8$. 

By the periodicity of the exponential function,
the theta function of an even lattice is invariant under the substitution $z\mapsto z+1$.
The so called {\bf theta transformation formula} (\cite[Proposition 2.1]{Ebeling},
\cite[Proposition 16]{Serre})
 relates the theta series of the dual lattice 
$L^{\#} $ to $\theta _{L}$. 
In particular the theta function of an even unimodular lattice $L$ is a 
modular form of weight $n/2$ for the full modular group $\SL_2(\Z) $. 
For details on modular forms (including their definition) I refer to \cite{Ebeling}
or \cite{Serre}. 
The main result we need here is the following theorem describing the structure
of the graded ring of modular forms.

\begin{theorem}
Let $E_4$ and $E_6$ denote the normalized Eisenstein series of weight $4$ and $6$,
$$E_4 =  1 +240 \sum _{j=1}^{\infty } \sigma _3(j) q^j =\theta _{\E_8}
,\ E_6 = 1 - 504 \sum_{j=1}^{\infty} \sigma _5(j) q^j $$
where $\sigma _r(j)$ is the sum over the $r$-th powers of all divisors of $j$.
 Then the ring of modular forms for the full modular group is
$${\mathcal M}(\SL_2(\Z) )  = \C[E_4,E_6] $$ 
 the polynomial ring in $E_4$ and $E_6$.
\end{theorem}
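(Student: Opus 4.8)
The plan is to prove this via the \emph{valence formula} (the $k/12$-formula), which controls the total order of vanishing of a nonzero modular form and thereby bounds the dimensions of the graded pieces so tightly that the monomials in $E_4$ and $E_6$ must exhaust them. First I would establish that for a nonzero modular form $f$ of weight $k$ for $\SL_2(\Z)$,
$$\mathrm{ord}_\infty(f) + \tfrac12 \mathrm{ord}_i(f) + \tfrac13 \mathrm{ord}_\rho(f) + \sum_{P} \mathrm{ord}_P(f) = \frac{k}{12},$$
where $\rho = \exp(2\pi i/3)$, the remaining sum runs over the other points of a fundamental domain, and $\mathrm{ord}_\infty$ measures vanishing in the variable $q$. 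The proof integrates $\frac{1}{2\pi i}\frac{f'}{f}$ around the boundary of the standard fundamental domain $\{z\in\HH \mid \abs{z}\geq 1,\ \abs{\Re(z)}\leq \tfrac12\}$: the two vertical sides cancel by $f(z+1)=f(z)$, the two arcs of the unit circle are matched by $z\mapsto -1/z$, whose automorphy factor $z^k$ produces the term $k/12$, and small indentations around the cusp and the elliptic points $i,\rho$ yield $\mathrm{ord}_\infty(f)$, $\tfrac12\mathrm{ord}_i(f)$, $\tfrac13\mathrm{ord}_\rho(f)$, the fractions reflecting the orders of the stabilisers in $\PSL_2(\Z)$. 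I expect this contour argument, and in particular getting the indentations around the elliptic points and the cusp exactly right, to be the main obstacle.

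Writing $M_k$ for the space of modular forms of weight $k$, several consequences are then immediate. Since the left-hand side is a nonnegative combination of orders, there are no nonzero forms of weight $k<0$, the weight-$0$ forms are constants, and $M_2=0$ because $k/12=1/6$ admits no such representation; an elementary remark using $-I\in\SL_2(\Z)$ gives $M_k=0$ for odd $k$. More precisely the formula pins down $\dim_\C M_k$, which for even $k$ is $\lfloor k/12\rfloor+1$ unless $k\equiv 2\pmod{12}$, in which case it is $\lfloor k/12\rfloor$. Next I would introduce the discriminant $\Delta := (E_4^3-E_6^2)/1728$ of weight $12$. From $E_4=\theta_{\E_8}=1+240q+\cdots$ and $E_6=1-504q+\cdots$ one finds $\Delta = q+\cdots$, so $\Delta$ is a cusp form with $\mathrm{ord}_\infty(\Delta)=1$; the valence formula then forces $\Delta$ to be nonvanishing on all of $\HH$ with only a simple zero at the cusp. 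In particular $\Delta\neq 0$, which yields algebraic independence of $E_4$ and $E_6$ and hence injectivity of the map $\C[E_4,E_6]\to{\mathcal M}(\SL_2(\Z))$.

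Finally I would prove surjectivity by induction on $k$. The spaces $M_0,M_4,M_6,M_8,M_{10}$ are at most one-dimensional by the dimension bounds above and are spanned by $1,E_4,E_6,E_4^2,E_4E_6$, while $M_2=0$, settling $k<12$. For even $k\geq 12$ and $f\in M_k$, choose nonnegative integers $a,b$ with $4a+6b=k$; subtracting a suitable scalar multiple of $E_4^aE_6^b$ cancels the constant term, so $f-cE_4^aE_6^b$ is a cusp form and is therefore divisible by $\Delta$, since $\Delta$ vanishes only at the cusp and there to first order. The quotient $(f-cE_4^aE_6^b)/\Delta$ is a modular form of weight $k-12$, which by the induction hypothesis lies in $\C[E_4,E_6]$; multiplying back by $\Delta = (E_4^3-E_6^2)/1728$ exhibits $f$ as a polynomial in $E_4$ and $E_6$ and completes the proof.
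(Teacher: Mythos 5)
Your proposal is correct, but there is nothing in the paper to compare it against: the paper states this theorem without proof, deferring to the books of Ebeling and Serre for the theory of modular forms. The argument you outline --- the valence formula $\mathrm{ord}_\infty(f)+\tfrac12\mathrm{ord}_i(f)+\tfrac13\mathrm{ord}_\rho(f)+\sum_P\mathrm{ord}_P(f)=k/12$ obtained by contour integration of $f'/f$ around the fundamental domain, the resulting vanishing of $M_2$ and one-dimensionality of $M_0,M_4,M_6,M_8,M_{10}$, the non-vanishing of $\Delta$ on $\HH$ with a simple zero at the cusp, and the induction $f\mapsto (f-cE_4^aE_6^b)/\Delta$ --- is exactly the classical proof in Serre's \emph{A Course in Arithmetic}, Chapter VII, i.e.\ the proof the paper is implicitly invoking. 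One step is stated more tersely than it deserves: $\Delta\neq 0$ gives only that $E_4^3$ and $E_6^2$ are not proportional (using that both have constant term $1$); to conclude that $E_4$ and $E_6$ are algebraically independent you should add the standard reduction --- decompose a putative relation into isobaric components of fixed weight and observe that a nontrivial linear relation among the $E_4^aE_6^b$ with $4a+6b=k$ would make the non-constant modular function $E_4^3/E_6^2$ satisfy a polynomial equation over $\C$, which is impossible. With that sentence added, your proof is complete and is the intended one.
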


So any modular form $f$ of weight $k$ has a unique expression as 
$$f=\sum _{4a+6b=k} c(a,b) E_4^aE_6^b \mbox{ with } c(a,b) \in \C  . $$
The vanishing order of $f$ at $z=i\infty $  (so $q=0$) defines a valuation on 
${\mathcal M}(\SL_2(\Z) )$ with associated maximal ideal ${\mathcal S}(\SL_2(\Z))$, the
space of {\bf cusp forms}. This is a principal ideal generated by 
$\Delta $ with 
\begin{equation} \label{EqDelta}
\Delta = \frac{1}{1728} (E_4^3-E_6^2) = 
q -24 q^2  + 252 q^3 - 1472 q^4 + \ldots  \end{equation}

\begin{theorem}
Let $L$ be an even unimodular lattice of dimension $n$.
Then $n$ is a multiple of $8$ and $\theta _L \in \C[E_4,\Delta ]_{n/2}$. 
\end{theorem}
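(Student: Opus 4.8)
The plan is to start from the structure theorem for modular forms and then eliminate $E_6$ by a parity argument on the weight. As recalled above, the theta transformation formula together with $L = L^{\#}$ shows that $\theta_L$ is a modular form of weight $n/2$ for $\SL_2(\Z)$, and the theory of quadratic forms (equivalently, the Gauss--Milgram sum attached to the trivial discriminant form of a unimodular lattice) forces the signature $n$ to be a multiple of $8$. In particular the weight $k := n/2$ satisfies $4 \mid k$. By the structure theorem $\mathcal M(\SL_2(\Z)) = \C[E_4,E_6]$, so I may write
$$\theta_L = \sum_{4a+6b = k} c(a,b)\, E_4^a E_6^b, \qquad c(a,b)\in\C.$$

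The key observation is that only \emph{even} powers of $E_6$ can occur. Reducing the relation $4a + 6b = k$ modulo $4$ and using $4 \mid k$ gives $2b \equiv 0 \pmod 4$, hence $b$ is even for every monomial with $c(a,b) \neq 0$. It then suffices to rewrite even powers of $E_6$ inside $\C[E_4,\Delta]$: from the defining identity \eqref{EqDelta} one has $E_6^{2} = E_4^{3} - 1728\,\Delta$, so a typical term with $b = 2m$ becomes
$$E_4^a E_6^{2m} = E_4^a\,\bigl(E_4^3 - 1728\,\Delta\bigr)^m \in \C[E_4,\Delta].$$
Summing over all terms shows $\theta_L \in \C[E_4,\Delta]$; since $E_4$ and $\Delta$ are homogeneous of weights $4$ and $12$ and $\theta_L$ has weight $n/2$, this lands $\theta_L$ in the homogeneous component $\C[E_4,\Delta]_{n/2}$.

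The argument is short, and its only nontrivial input is the divisibility $8 \mid n$: this is exactly what upgrades ``weight even'' to ``weight divisible by $4$'' and thereby kills the odd powers of $E_6$. I therefore expect the main point requiring care to be the justification of $8 \mid n$ --- i.e. that $\theta_L$ really is modular for the full group with trivial multiplier --- rather than the subsequent polynomial manipulation, which is routine.
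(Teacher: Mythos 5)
Your proof is correct and follows exactly the route the paper sets up (the paper itself states this theorem without a written proof, but recalls all the ingredients you use: the theta transformation formula giving weight $n/2$ modularity, the fact from the theory of quadratic forms that $8\mid n$, and the structure theorem ${\mathcal M}(\SL_2(\Z))=\C[E_4,E_6]$). The parity argument $4a+6b=k$, $4\mid k$ $\Rightarrow$ $b$ even, followed by the substitution $E_6^2=E_4^3-1728\Delta$ from Equation \eqref{EqDelta}, is the standard and complete way to finish.
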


For $4k=n/2$ the space 
 ${\mathcal M}_{4k}(\SL_2(\Z ))$ has a very nice basis.
$$\begin{array}{llllll} 
E_4^k  = & 1 + & 240 k q + & * q^2 +& \ldots \\
E_4^{k-3} \Delta  = & & \phantom{240} q + & * q^2 + & \ldots \\
E_4^{k-6} \Delta^2  =&  &  & \phantom{*} q^2 + & \ldots \\
\vdots  \\
E_4^{k-3m_k} \Delta ^{m_k} = & & \ldots & & q^{m_k} + & \ldots \\ 
\end{array}
$$
where $m_k=\lfloor \frac{n}{24} \rfloor = \lfloor \frac{k}{3} \rfloor $.

In particular ${\mathcal M}_{4k}(\SL_2(\Z))$  contains a unique form
$$f^{(k)}:= 1+ 0 q + 0 q^2 + \ldots + 0 q^{m_k} + a(f^{(k)}) q^{m_k+1} + b(f^{(k)}) q^{m_k+2} + \ldots $$
the {\bf extremal modular form} of weight $4k$.
If the minimum of an even unimodular lattice $L$ of dimension $n=8k$ is 
$\geq 2+2\lfloor \frac{n}{24} \rfloor$ then $\theta _L =f^{(k)}$ is equal to 
the extremal modular form.
Already Siegel has shown that the first nontrivial coefficient $a(f^{(k)})$ is always
a positive integer. In particular $\min (L)  =  2+2\lfloor \frac{n}{24} \rfloor$:

\begin{cor}\label{extremalbound}
Let $L$ be an even unimodular lattice of dimension $n$.
Then $$\min (L) \leq 2+ 2\lfloor \frac{n}{24} \rfloor .$$ The lattice $L$ 
is called {\bf extremal} if $\min (L) = 2+ 2\lfloor \frac{n}{24} \rfloor .$
\end{cor}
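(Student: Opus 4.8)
The plan is to read the bound off directly from the structure of the graded ring of modular forms together with Siegel's positivity result, both recalled above. The corollary is in essence a repackaging of the preceding discussion, so the work is to assemble the three ingredients (modularity of $\theta_L$, the triangular basis of $\mathcal{M}_{4k}(\SL_2(\Z))$, and Siegel's $a(f^{(k)})>0$) into a clean case distinction.

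First I would record the shape of the theta series. Since $L$ is even, $\min(L)$ is a positive even integer and $Q(\ell) = \frac{1}{2}(\ell,\ell) \geq \frac{1}{2}\min(L)$ for every nonzero $\ell\in L$; hence $\theta_L = 1 + \sum_{j\geq \min(L)/2} a_j q^j$. By the theorem above, $\theta_L$ is a modular form of weight $n/2 = 4k$ with $k = n/8$, so $\theta_L \in \mathcal{M}_{4k}(\SL_2(\Z))$. Write $m_k = \lfloor n/24 \rfloor$. If $\min(L) \leq 2 + 2m_k$ there is nothing to prove, so I assume $\min(L) \geq 2 + 2m_k$, equivalently $\min(L)/2 \geq m_k + 1$. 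Then $a_1 = \cdots = a_{m_k} = 0$, so $\theta_L = 1 + 0q + \cdots + 0q^{m_k} + \cdots$. Here I would invoke the triangular basis $E_4^{k-3j}\Delta^j$ ($0\leq j\leq m_k$) displayed above, whose $j$-th member has $q$-expansion $q^j + O(q^{j+1})$: by successively subtracting suitable multiples to cancel coefficients one sees that a weight-$4k$ form is uniquely determined by its first $m_k+1$ Fourier coefficients. The normalization $1,0,\ldots,0$ therefore forces $\theta_L = f^{(k)}$, the extremal modular form.

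It remains to produce an actual minimal vector. By Siegel's theorem the coefficient $a(f^{(k)})$ of $q^{m_k+1}$ in $f^{(k)}$ is a positive integer. Since $\theta_L = f^{(k)}$, this coefficient equals $|A_{m_k+1}(L)| = |\{\ell\in L \mid Q(\ell) = m_k+1\}| > 0$, so $L$ contains a nonzero vector $\ell$ with $(\ell,\ell) = 2Q(\ell) = 2 + 2m_k$. Hence $\min(L) \leq 2 + 2m_k$, and combined with the standing assumption this gives $\min(L) = 2 + 2m_k$. In either case $\min(L) \leq 2 + 2\lfloor n/24 \rfloor$, as claimed.

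The only genuinely nontrivial input is Siegel's positivity $a(f^{(k)}) > 0$, and that is where I expect the real content to sit: it is what guarantees the extremal theta series is actually realised by lattice vectors rather than being a vacuous formal bound. Everything else reduces to the already-established modularity of $\theta_L$ and elementary triangular linear algebra on $q$-expansions; the evenness of $L$ is used only to ensure $\theta_L$ has integral exponents so that the truncation at $q^{m_k}$ makes sense.
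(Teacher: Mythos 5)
Your proof is correct and follows exactly the route the paper takes: the triangular basis $E_4^{k-3j}\Delta^j$ forces $\theta_L=f^{(k)}$ once the first $m_k$ coefficients vanish, and Siegel's positivity of $a(f^{(k)})$ then produces a vector of norm $2+2\lfloor n/24\rfloor$. You have correctly identified Siegel's theorem as the one nontrivial input; the rest matches the paper's (implicit) argument.
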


Since the second nontrivial coefficient $b(f^{(k)})$ of the extremal modular 
becomes negative for all $k\geq 20,408$
 there are no extremal even unimodular lattices
in dimension $n\geq 163,264$. 

\begin{center}
{\bf Extremal even unimodular lattices L$\leq \R ^n$}\footnote{The known
extremal lattices in the jump dimensions $24k$ are found in the
online database of lattices, http://www.math.rwth-aachen.de/$\sim$Gabriele.Nebe/LATTICES/}  \\ \vspace{0.2cm}
\begin{tabular}{|c|c|c|c|c|c|c|c|c|c|}
\hline
$n$ & 8 & 16 & 24 & 32 & 40 &  48 &  72 & 80 & $\geq 163,264$  \\
\hline
min(L) & 2 & 2 &  4 & 4 & 4   & 6 & 8  &  8 &  \\
\hline
number of & & & & & & & &  &  \\
extremal & 1 & 2 & 1 & $\geq 10^7$  & $ \geq 10^{51} $ & $\geq 3$  & $\geq 1$ & $\geq 4$ & 0 \\
lattices & & & & & & &  & & \\
\hline
\end{tabular}
\end{center}

The densest lattices in dimension 8 and 24 
and the densest known lattices in dimension 48 and 72 
are extremal even unimodular lattices.
As we will see in Section \ref{ModFormext} Venkov's theory of 
strongly perfect lattices allows to show that extremal even unimodular lattices
of dimension $n\equiv 0,8 \pmod{24} $ are extreme, i.e. realise a 
local maximum of the density function. 

\subsection{Venkov's classification of Niemeier lattices.}\label{Nie}

In 1968 Niemeier classified the even unimodular lattices of dimension 24.
Up to isometry there are 24 such lattices $L$ and they are distinguished by their
root sublattice
$$R(L) := \langle \ell \in L \mid Q(\ell ) = 1 \rangle _{\Z } .$$
In 1978 Boris Venkov \cite{V1} gave a more structural proof of Niemeier's list 
by showing the following Theorem 

\begin{theorem}\label{dim24} 
Let $L$ be an even unimodular lattice of dimension $24$. Then 
\begin{itemize}
\item[(a)] 
The root sublattice $R(L)$ is either 0 or has full rank. 
\item[(b)] 
The indecomposable components of $R(L)$ have the same Coxeter number.
\end{itemize}
\end{theorem}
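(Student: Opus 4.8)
The plan is to reduce both assertions to a single fact: the set of roots $R := A_1(L) = \{\ell \in L \mid Q(\ell)=1\}$ is strongly eutactic as a configuration in $\R^{24}$. First I would dispose of the trivial case. By Corollary \ref{extremalbound} one has $\min(L)\le 4$, and since $L$ is even $\min(L)\in\{2,4\}$; if $\min(L)=4$ there are no norm-$2$ vectors, so $R=\emptyset$, $R(L)=0$, and both claims hold vacuously. Hence I assume $\min(L)=2$, i.e. $R=\Min(L)$. The essential input is the harmonic refinement of the theta transformation formula (see \cite{Ebeling}): for $p\in\Harm_t$ the series $\theta_{L,p}:=\sum_{\ell\in L}p(\ell)q^{Q(\ell)}$ is a modular form for $\SL_2(\Z)$ of weight $12+t$, and a cusp form once $t>0$. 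Taking $t=2$ produces a cusp form of weight $14$; since $\mathcal{S}_{14}(\SL_2(\Z))=0$, it vanishes identically, so its $q^1$-coefficient $\sum_{x\in R}p(x)$ is $0$ for every $p\in\Harm_2$. By Remark \ref{Harmsteut} this means $R$ is strongly eutactic, and Remark \ref{strongeut} (with $\min(L)=2$, $n=24$) makes the statement explicit:
$$\sum_{x\in R}(x,\alpha)^2 = \frac{|R|}{12}(\alpha,\alpha) \mbox{ for all } \alpha\in\R^{24}.$$

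For (a) I would argue by contradiction. Suppose $R\neq\emptyset$ but $R(L)$ is not of full rank, and let $V=\langle R\rangle_{\R}$. Then there is a nonzero $\alpha\in V^{\perp}$, for which $(x,\alpha)=0$ for all $x\in R$; the left-hand side of the displayed identity is then $0$ while the right-hand side equals $\frac{|R|}{12}(\alpha,\alpha)>0$, a contradiction. Hence $V=\R^{24}$ and $R(L)$ has full rank.

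For (b) I would exploit orthogonality of the components. Write $R(L)=R_1\oplus\cdots\oplus R_k$ as the orthogonal sum of its indecomposable root sublattices, spanning the pairwise orthogonal subspaces $V_1,\ldots,V_k$, so that the roots lying in $V_i$ are precisely the minimal vectors of $R_i$. Fixing $\alpha\in V_i$, every root in another component is orthogonal to $\alpha$, so the displayed identity collapses to $\sum_{x\in R_i}(x,\alpha)^2=\frac{|R|}{12}(\alpha,\alpha)$. On the other hand $R_i$ is an indecomposable root lattice in its own span, so Corollary \ref{rootsteut} gives $\sum_{x\in R_i}(x,\alpha)^2=2h(R_i)(\alpha,\alpha)$ for $\alpha\in V_i$. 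Comparing the two shows $h(R_i)=|R|/24$ for every $i$, so all indecomposable components share the common Coxeter number $|R|/24$.

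The only non-elementary step, and the one I expect to be the main obstacle, is securing the vanishing $\sum_{x\in R}p(x)=0$ for $p\in\Harm_2$: this rests on the harmonic theta transformation formula together with the dimension count $\mathcal{S}_{14}(\SL_2(\Z))=0$, which is exactly what forces the root configuration to be strongly eutactic. Once that is in hand, both parts follow from the elementary observation that distinct indecomposable components of a root system span mutually orthogonal subspaces.
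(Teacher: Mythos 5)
Your proposal is correct and follows essentially the same route as the paper: the vanishing of $\theta_{L,p}$ for $p\in\Harm_2$ (the paper phrases it as $\Delta\,\mathcal{M}_2(\SL_2(\Z))=0$, you as $\mathcal{S}_{14}(\SL_2(\Z))=0$, which is the same fact) forces the roots to form a spherical $2$-design, giving (a) by nondegeneracy and (b) by restricting $\alpha$ to the span of each component and invoking Corollary \ref{rootsteut}. The only difference is cosmetic bookkeeping (your explicit treatment of the rootless case and of why each root lies in exactly one component), which the paper leaves implicit.
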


The
possible root systems  are then found combinatorially from the classification
of indecomposable root systems and their Coxeter numbers:
$$
\begin{array}{l}
\emptyset,\ 24 \A_1,\  12 \A_2,\  8 \A_3,\  6 \A_4,\  4 \A_6,\  3 \A_8 ,\  2 \A_{12} ,\  \A_{24} , 
6 \D_4 ,\  4 \D_6,\\  3 \D _8,\  2 \D_{12},\  \D _{24} ,\
  4 \E_6,\  3 \E_8 , \
4 \A_5 \perp \D_4 ,\  2 \A_7 \perp 2 \D_5 ,\  2 \A_9 \perp \D_6,\\  \A_{15} \perp \D_9,   \ 
\E_8 \perp \D_{16} , 2 \E_7 \perp \D_{10} ,\  \E_7 \perp \A_{17} ,\  
\E_6\perp \D_7\perp \A_{11} \end{array} $$
If $R(L) \neq 0$, then 
$R(L) \leq L = L^{\#} \leq R(L)^{\#} $ so $L$ corresponds to
 a maximal isotropic subgroup of 
the discriminant group $R(L)^{\#}/R(L)$.
Going through all these 23 possibilities using algebraic coding theory one then
finds the following theorem.

\begin{theorem}
For each of the $23$ non zero root lattices listed above there is a unique
even unimodular lattice in dimension $24$ having this root sublattice.
\end{theorem}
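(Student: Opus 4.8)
The plan is to realise every even unimodular lattice $L$ with $R(L) = R$ as a glue lattice over the fixed root lattice $R$ and to reduce the statement to a classification of admissible glue codes up to the symmetries of $R$. First I would set up the discriminant form. Write $R = R_1 \perp \cdots \perp R_k$ for the given full-rank root lattice and let $D(R) := R^{\#}/R$ be its discriminant group, a finite abelian group of order $\det(R)$ decomposing as $\bigoplus_i R_i^{\#}/R_i$. The inner product induces $b : D(R)\times D(R) \to \Q/\Z$, and $Q$ induces $\bar q : D(R) \to \Q/\Z$, $\bar q(x+R) := Q(x) + \Z$. Any lattice $M$ with $R \leq M \leq R^{\#}$ corresponds to the subgroup $C := M/R \leq D(R)$, and one checks: $M$ is even iff $C$ is \emph{isotropic} ($\bar q|_C = 0$), and $M$ is unimodular iff $C$ is maximal isotropic, equivalently $C = C^{\perp}$ for $b$ and $|C|^2 = |D(R)|$. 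Thus even unimodular overlattices of $R$ correspond bijectively to the maximal isotropic subgroups (\emph{self-dual glue codes}) of $(D(R), \bar q)$; in particular $\det(R)$ must be a perfect square, a first consistency check on the $23$ candidate systems.

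Next I would encode the requirement $R(L) = R$. Since $Q(x) \in \Z$ on every coset of an isotropic $C$, the norms $(v,v)$ on a nonzero coset $c + R$ are even, so the only way $L$ could acquire a root outside $R$ is through a vector of norm $2$ in some $c + R$ with $0 \neq c \in C$. Hence $R(L) = R$ holds iff the glue code is \emph{admissible}, meaning $\min\{(v,v) : v \in c+R\} \geq 4$ for every nonzero $c \in C$; the roots of $L$ inside $R$ are then exactly the roots of $R$, as desired. I would then reduce isometry to an orbit count: any isometry $L \to L'$ must carry $R(L)=R$ onto $R(L')=R$, hence restricts to an element of $\Aut(R)$ inducing an automorphism of $(D(R),\bar q)$ that takes the glue code of $L$ to that of $L'$; conversely every such automorphism of $R$ extends to an isometry. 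Because the Weyl group $W(R)$ acts trivially on $D(R)$ (a reflection $\sigma_\ell$ alters $x \in R^{\#}$ by $(x,\ell)\ell \in R$), this reduces to the action of the finite group $\Aut(R)/W(R)$ of diagram automorphisms and permutations of isomorphic components. So the theorem becomes: for each of the $23$ systems the admissible maximal isotropic subgroups of $D(R)$ form a single nonempty orbit under $\Aut(R)/W(R)$.

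I would then carry out existence and uniqueness system by system. For each $R$ one computes $(D(R),\bar q)$ from the known discriminant forms of $\A_n, \D_n, \E_6, \E_7$ (with $\E_8$ unimodular, contributing nothing), exhibits one admissible self-dual glue code, and shows any other lies in its $\Aut(R)/W(R)$-orbit. The main obstacle is that this is genuinely a family of coding-theoretic problems with no single uniform argument: for $24\A_1$ the admissible codes are the doubly-even self-dual binary codes of length $24$ and minimum weight $\geq 8$, so existence and uniqueness amount to the uniqueness of the binary Golay code; for $12\A_2$ one needs the ternary Golay code over $\F_3 = D(\A_2)$; and mixed systems such as $\E_6 \perp \D_7 \perp \A_{11}$ require gluing heterogeneous discriminant groups and a more delicate orbit analysis. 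The admissibility condition (minimal coset norm $\geq 4$) is precisely what forces these to be the good codes, and I expect the hardest work to be establishing uniqueness of the relevant codes and checking that no admissible glue code is overlooked in the cyclic and mixed cases. Once each of the $23$ cases is reduced to a single nonempty orbit, the theorem follows.
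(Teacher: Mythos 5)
Your proposal is correct and follows essentially the same route the paper sketches: identifying even unimodular overlattices of $R$ with maximal isotropic (self-dual) glue codes in the discriminant group $R^{\#}/R$ and then settling existence and uniqueness case by case with algebraic coding theory. Your write-up in fact supplies more detail (the admissibility condition excluding new roots, the reduction to $\Aut(R)/W(R)$-orbits) than the paper's one-line reduction, but the underlying argument is the same.
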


The uniqueness of the Leech lattice, the unique even unimodular
lattice of dimension 24 with no roots is proved differently.
It follows for instance from the uniqueness of the Golay code,
but also by applying the mass formula.
$$\sum _{i=1}^{h} |\Aut(L_i)| ^{-1} = m_{2k} =
\frac{|B_{k}|}{2k} \prod _{j=1}^{k-1} \frac{B_{2j}}{4j} $$
where $L_1,\ldots , L_h $ represent the isometry classes
of even unimodular lattices in $\R ^{2k}$.

For the proof of Theorem \ref{dim24} we need the following result by
Hecke. 

\begin{theorem} \label{Hecke}
Let $L$ be an even unimodular lattice of dimension $n$ 
 and let 
$p\in \Harm _t$ be a harmonic polynomial of degree $t$
(see Definition \ref{Harmt}).
Then 
$$\theta_{L,p }:= \sum _{\ell \in L } p(\ell ) q^{Q(\ell )} \in {\mathcal M}_{n/2+t}(\SL_2(\Z) ) $$
is a modular form of weight $n/2+t$ for the full modular group.
\end{theorem}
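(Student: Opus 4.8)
The plan is to verify directly that $\theta_{L,p}$ satisfies the transformation law of a modular form of weight $k:=n/2+t$ under the two standard generators of $\SL_2(\Z)$, namely $T:z\mapsto z+1$ and $S:z\mapsto -1/z$, and then to check holomorphy on $\HH$ and at the cusp. Writing $\theta_{L,p}(z)=\sum_{\ell\in L}p(\ell)\exp(\pi i z(\ell,\ell))$, invariance under $T$ is immediate: since $L$ is even we have $(\ell,\ell)\in 2\Z$, hence $\exp(\pi i(\ell,\ell))=1$ and $\theta_{L,p}(z+1)=\theta_{L,p}(z)$. The real content therefore lies in the behaviour under $S$, i.e.\ in relating $\theta_{L,p}(-1/z)$ to $\theta_{L,p}(z)$.

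For the $S$-transformation I would apply Poisson summation to the Schwartz function $f_z(x):=p(x)\exp(\pi i z(x,x))$ on $\R^n$ (for fixed $z\in\HH$, so that the Gaussian factor decays), giving $\sum_{\ell\in L}f_z(\ell)=\frac{1}{\sqrt{\det L}}\sum_{m\in L^\#}\widehat{f_z}(m)$. The key analytic input is the Fourier-eigenfunction property of harmonic polynomials: for $p\in\Harm_t$ the Fourier transform of $p(x)\exp(-\pi(x,x))$ equals $(-i)^t p(y)\exp(-\pi(y,y))$ (the Hecke--Bochner identity). Combining this with the substitution $x\mapsto s^{-1/2}x$ for $s=z/i$ and the homogeneity $p(s^{-1/2}x)=s^{-t/2}p(x)$, a short computation yields
$$\widehat{f_z}(y)=(z/i)^{-n/2-t}(-i)^t\,p(y)\exp\bigl(\pi i(-1/z)(y,y)\bigr).$$

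The unimodularity of $L$ now does the remaining work: $L^\#=L$ and $\det L=1$, so Poisson summation collapses to $\theta_{L,p}(z)=(z/i)^{-n/2-t}(-i)^t\,\theta_{L,p}(-1/z)$, that is, $\theta_{L,p}(-1/z)=(z/i)^{n/2+t}(-i)^{-t}\theta_{L,p}(z)$. The prefactor simplifies to $i^{-n/2}z^{n/2+t}$, and since $8\mid n$ we have $i^{-n/2}=1$; hence $\theta_{L,p}(-1/z)=z^{n/2+t}\theta_{L,p}(z)$, exactly the $S$-relation for weight $k$. Finally, holomorphy on $\HH$ and at the cusp is read off from the expansion $\theta_{L,p}=\sum_j\bigl(\sum_{Q(\ell)=j}p(\ell)\bigr)q^j$, which converges locally uniformly on $\HH$ and stays bounded as $\Im z\to\infty$ (only nonnegative powers of $q$ occur). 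I expect the main obstacle to be the bookkeeping in the second step: establishing the Hecke--Bochner eigenvalue identity for $p\in\Harm_t$ and tracking the precise powers of $z$ and of $i$ through the scaling and the Poisson summation, since it is exactly the factor $i^{-n/2}$ that must become trivial for $\theta_{L,p}$ to be a genuine modular form rather than one carrying a multiplier system.
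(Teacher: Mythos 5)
Your argument is correct: the $T$-invariance from evenness of $L$, the Poisson-summation computation combined with the Hecke--Bochner eigenfunction property of $p(x)e^{-\pi(x,x)}$ with eigenvalue $(-i)^t$ for $p\in\Harm_t$, the collapse $L^{\#}=L$ and $\det L=1$, and the disappearance of the factor $i^{-n/2}$ because $8\mid n$ together yield exactly the weight-$(n/2+t)$ transformation law under $S$, which with $T$-invariance and the $q$-expansion argument for holomorphy is the standard proof of Hecke's theorem. The paper itself gives no proof of this statement --- it is quoted as a classical result of Hecke, with the relevant theta transformation machinery deferred to the cited books of Ebeling and Serre, where essentially your argument appears --- so your route is the canonical one and there is nothing substantive to compare.
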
 

If $p=1$, then $\theta _{L,p} = \theta _L$.
For non constant homogeneous polynomials $p$ one has $p(0)=0$ and therefore 
$\theta _{L,p} \in {\mathcal S}_{n/2+t}(\SL_2(\Z))$ is a cusp form
and hence divisible by the form $\Delta $ from 
Equation \eqref{EqDelta}.

\begin{proof} (Theorem \ref{dim24})
Let $L$ be an even unimodular lattice of dimension 24 with $R(L) \neq 0$. For $\alpha \in \R^n$ let 
$p_{\alpha }\in \Harm_2$ be the harmonic polynomial 
defined in Equation \eqref{EqDefpa}. 
Then by Theorem \ref{Hecke} the theta series 
$$\theta _{L,p_{\alpha }} \in \Delta {\mathcal M}_{12+2-12} (\SL_2(\Z)) = \Delta {\mathcal M}_2(\SL_2(\Z)) = 0 $$
since there are no non zero modular forms of weight 2. 
But this implies that 
$$\sum_{x\in \Min(L)} p_{\alpha }(x) = 0 \mbox{ so } 
\sum _{x\in \Min (L)} (x,\alpha )^2 = \frac{2|\Min (L)|}{24} (\alpha , \alpha ). $$
In particular if $(x,\alpha ) = 0$ for all $x\in \Min (L)$ then $\alpha = 0$ and 
hence $R(L) ^{\perp } = 0$. 
\\
Now write
 $R(L) = R_1\perp \ldots \perp R_s $ with indecomposable root lattices $R_i$
of dimension  $n_i=\dim (R_i)$.
For $\alpha \in \langle R_i \rangle _{\R }$ we obtain
$$\sum _{x\in \Min (L)} (x,\alpha )^2  = 
\sum _{x\in \Min (R_i)} (x,\alpha )^2  = \frac{2|\Min (R_i)|}{n_i} (\alpha ,\alpha ) $$
by Corollary \ref{rootsteut} and Remark \ref{strongeut}.
Hence $h(R_i) = \frac{|\Min (R_i)|}{n_i} = \frac{|\Min (L)|}{24} $ for all $i$.
\end{proof}

\subsection{The Koch-Venkov invariant.}

Even unimodular lattices are fully classified up to dimension 24.
In dimension 32 the mass formula shows that there are more than 80 million
such lattices, 
 more than 10 million of which are extremal (\cite{King}). 
Nevertheless Koch (1988) and Venkov (\cite{V321}, \cite{V322}, \cite{V323})
 started to investigate 
32-dimensional even unimodular lattices. During these days it was not 
possible to algorithmically decide equivalence of 32-dimensional extremal lattices.

To understand the motivation of Koch and Venkov one should recall the
well known correspondence between framed unimodular lattices and  self-dual 
codes.

\begin{remark}\label{constructionA}
Let $L=L^{\#} \leq \R^n$ be a unimodular lattice and 
$F:=\{ v_1,\ldots , v_n \} \subset L $ be a {\bf p-frame}, 
i.e. a set of pairwise orthogonal vectors of norm $(v_i,v_i) = p$ for all $i$. 
Then any $\ell \in L$ is a unique sum 
$\ell = \sum_{i=1}^n a_i v_i $ with $a_i \in \frac{1}{p} \Z $ and 
$$C(L,F):= \{ (\overline{a}_1,\ldots , \overline{a}_n) \mid \sum a_iv_i \in L \} \leq \F_p^n $$ 
is a self-dual code.
Here $\overline{a} = a+\Z \in \frac{1}{p} \Z / \Z \cong \F_p$.
On the other hand, given some $C=C^{\perp}\leq \F_p^n$ and 
a $p$-frame $\{ v_1,\ldots , v_n \}$ the lattice 
$$L(C):= \{ \sum _{i=1}^n a_i v_i \mid 
 (\overline{a}_1,\ldots , \overline{a}_n ) \in  C , 
a_i\in \frac{1}{p}\Z  \} $$
is a unimodular lattice. 
$L(C)$ is even, if and only if $p=2$ and $C$ is doubly-even.
\end{remark}

Koch and Venkov define the {\bf defect} of an integral $n$-dimensional lattice
$L$ as $\delta (L):=n-s$,
 where $s$ is the maximal cardinality of a set of pairwise orthogonal
roots in $L$. 
So the lattices of defect 0 are exactly the lattices $L(C)$ for self-dual binary
codes $C$. 
Koch and Venkov show the following 

\begin{theorem}\label{deltane}
Let $L$ be an integral unimodular lattice of even dimension  $n$.
If $\delta (L) \leq 13$, then $\delta(L)$ is one of $0$, $8$, or $12$.
\end{theorem}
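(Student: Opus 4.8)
The plan is to study the lattice lying orthogonally to a maximal orthogonal set of roots. Fix pairwise orthogonal roots $v_1,\dots,v_s$ with $s$ maximal, so that $M:=\langle v_1,\dots,v_s\rangle\cong\A_1^s$, and set $K:=M^{\perp}\cap L$, a lattice of rank $\delta=\delta(L)=n-s$. Since $M^{\#}=\langle\tfrac12 v_1,\dots,\tfrac12 v_s\rangle$ we have the decisive relation $2M^{\#}=M$. As $M\perp K\subseteq L=L^{\#}$ has full rank, the glue group $H:=L/(M\perp K)$ embeds into $(M^{\#}/M)\oplus(K^{\#}/K)\cong\F_2^{\,s}\oplus(K^{\#}/K)$. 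I would first record that the projection $H\to\F_2^{\,s}$ is injective: a glue class with trivial $M$-part is represented by some $\ell\in L$ with $\pi_M(\ell)\in M$, whence $\pi_{K}(\ell)=\ell-\pi_M(\ell)\in L\cap M^{\perp}=K$ and $\ell\in M\perp K$. Hence $|H|$ is a power of $2$, and from $2^{s}\det(K)=\det(M\perp K)=|H|^{2}$ it follows that $\det(K)$ is a power of $2$.

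The key structural step is that $K$ is $2$-elementary, i.e. $2K^{\#}\subseteq K$. Using unimodularity of $L$ one has $\pi_{M^{\perp}}(L)=K^{\#}$ (orthogonal projection of a unimodular lattice onto the complement of a primitive sublattice). Given $y\in K^{\#}$ write $y=\pi_{M^{\perp}}(\ell)$ with $\ell\in L$; then $\pi_M(\ell)$ lies in the dual of the primitive closure $\bar M:=L\cap(M\otimes\R)$, so $2\pi_M(\ell)\in 2\bar M^{\#}\subseteq 2M^{\#}=M\subseteq L$. Therefore $2y=2\ell-2\pi_M(\ell)\in L\cap M^{\perp}=K$. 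Thus $K^{\#}/K\cong\F_2^{\,m}$ and $\det(K)=2^{m}$ with $m\le\operatorname{rank}(K)$. Next, by maximality $K$ has no root (a root of $K$ is a root of $L$ orthogonal to all $v_i$), and $K$ has at most one norm-$1$ vector up to sign: two orthogonal norm-$1$ vectors $f_1,f_2\in K$ would give $f_1+f_2,\ f_1-f_2\in L$, two orthogonal roots orthogonal to every $v_i$, enlarging the set. Hence either $K=K_0$ or $K=\langle f\rangle\perp K_0$ (one norm-$1$ vector), where $\min(K_0)\ge 3$ and $K_0$ is again $2$-elementary of rank $r:=\delta-a$ ($a\in\{0,1\}$) with $\det(K_0)=2^{m}$, $m\le r$.

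Now I would apply Hermite's inequality $\min(K_0)\le\gamma_r\,\det(K_0)^{1/r}\le 2\gamma_r$, where $\gamma_r$ is the Hermite constant of rank $r$ and $\det(K_0)^{1/r}=2^{m/r}\le 2$. Since $\min(K_0)\ge 3$ this forces $\gamma_r\ge 3/2$, hence $r\ge 5$; together with $a\le1$ this already excludes $\delta\in\{1,2,3,4\}$. If $L$ is even it has no norm-$1$ vectors, so $a=0$ and $K=K_0$ is even with $\min\ge4$; the sharper estimate $4\le 2\gamma_r$ forces $r\ge 8$, excluding $1\le\delta\le7$ in one stroke, with equality $\gamma_8=2$ pinning $K\cong\sqrt2\,\E_8$ at $\delta=8$.

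The remaining values $\delta\in\{5,6,7,9,10,11,13\}$ are precisely those where Hermite's bound is (nearly) tight, and eliminating them is the main obstacle. Here one must show that no root-free $2$-elementary lattice $K_0$ (with $\min\ge3$, respectively $\min\ge4$ in the even case) of rank $5,6,7,9,10,11$ or $13$ can occur as $M^{\perp}\cap L$. I would attack this through the gluing constraint: the discriminant form of $M\perp K_0$ must admit the Lagrangian $H$, and combining this with the oddity/signature bookkeeping forced by $n$ being even and the classification of $2$-elementary lattices of small rank should rule out the borderline ranks, while $\sqrt2\,\E_8$ and a rank-$12$ lattice such as $\sqrt2\,\D_{12}^{+}$ realise $\delta=8$ and $\delta=12$. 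I expect this final non-existence analysis, rather than the routine Hermite estimate, to carry essentially all the content of the theorem.
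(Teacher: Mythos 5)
There is a genuine gap. Your setup (the orthogonal complement $K=M^{\perp}\cap L$ of a maximal orthogonal root system $M\cong \A_1^{s}$, the fact that $K$ is $2$-elementary and root-free with at most one norm-one vector, and the Hermite estimate) is sound as far as it goes, but it only eliminates $\delta\le 4$ in general (and $\delta\le 7$ when $L$ is even). The ranks $5,6,7,9,10,11,13$ --- which you yourself flag as carrying ``essentially all the content'' --- are left to an unexecuted programme of discriminant-form and oddity bookkeeping, and it is not at all clear that this bookkeeping alone suffices: what one really needs is a classification of root-free $2$-elementary lattices of those ranks, and that is not an off-the-shelf input. Note also that your argument as written never uses the hypothesis that $n$ is even, which is a warning sign, since that hypothesis is essential to the paper's proof.

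The paper closes exactly this gap with a different device, the \emph{perestroika}. Instead of passing to the orthogonal complement, one observes that $\langle v_1,\dots,v_m,2L\rangle/2L$ is isotropic in $L/2L$ and extends it to a \emph{maximal} isotropic subspace $M/2L$; because $n$ is even this has dimension $n/2$, so the rescaled lattice $\sqrt{2}^{-1}M$ is again unimodular and contains the $m=n-\delta(L)$ pairwise orthogonal norm-one vectors $\frac{1}{\sqrt{2}}v_i$. Hence $\sqrt{2}^{-1}M\cong \Z^{m}\perp N$ with $N$ unimodular of dimension $\delta(L)$ and, by maximality of the orthogonal root set, of minimum $\geq 2$. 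Kneser's classification of unimodular lattices of minimum $\geq 2$ then does all the work in one stroke: no such $N$ exists in dimensions $1,\dots,7,9,10,11,13$, while $N\cong\E_8$ for $\delta=8$ and $N\cong\D_{12}^{+}$ for $\delta=12$. In short, your instinct that the Hermite estimate is not the real content is correct; the missing ingredient is the reduction, via the isotropic-subspace rescaling, to a single citable classification rather than a case-by-case genus analysis.
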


\begin{proof}
For the proof they use their notion of  perestroika of a lattice. 
Let $m=n-\delta(L)$ and $v_1,\ldots , v_m \in L$ be pairwise orthogonal 
roots. Then $\langle v_1,\ldots ,v_m ,2L \rangle /2L \leq L/2L$ is an isotropic
space and hence contained in some maximal isotropic space $M/2L$. 
The sublattice $M$ of $L$ is called a {\bf perestroika} of $L$. 
Since $n$ is even, the dimension of $M/2L$ is $\frac{n}{2}$ and 
hence $\sqrt{2}^{-1} M $ is a unimodular lattice containing the 
sublattice $\perp _{i=1}^m \frac{1}{\sqrt{2}} v_i \Z \cong \Z^m$.
So $M \cong \Z^m \perp N$ for some unimodular lattice $N$ of dimension
$\delta (L)$ of minimum $\geq 2$. By \cite{Kneser} there is no
such lattice $N$ of dimension $1,2,3,4,5,6,7,9,10,11,13$. 
If $\delta (L) =8$ then $N\cong \E _8$ and for $\delta (L) = 12$
the lattice $N$ is $\D_{12}^+$.
\end{proof}

Of course extremal lattices of dimension 32 have minimum 4, so they do not
contain any roots. Nevertheless the definition of defect is helpful here by
considering neighbors of the lattice. 

\begin{definition} (\cite{Kneser})
Two unimodular lattices $L$ and $M$ are called {\bf neighbors} if 
$L\cap M$ has index $2$ in $L$ (and hence also in $M$). 
\end{definition}

Kneser has shown that all neighbors $M$ of the unimodular lattice $L$
are of the form 
$$ M= L^{(v)} := L_v + \Z \frac{v}{2} \mbox{ with } 
L_v := \{ \ell \in L \mid (\ell,v) \in 2 \Z \} $$
for some $v\in L$ such that $(v,v) \in 4 \Z$. 
If $L$ is an even lattice then $M$ is even, if and only if $(v,v)\in 8\Z$.

\begin{remark}
Let $L$ be an even unimodular lattice with no roots and $v\in L$
such that $(v,v)\in 8\Z $.
If $x,y\in L^{(v)} \setminus L$ then $x\pm y\in L$. So any two roots
$x\neq \pm y \in L^{(v)}$ are orthogonal to each other and therefore 
 $L^{(v)}$ is an even unimodular lattice
with root system $(n-\delta (L^{(v)})) \A_1$. 
\end{remark}

\begin{definition} (Koch and Venkov)
Let $L$ be an extremal even unimodular lattice of dimension $32$. 
For $1\leq i\leq 32$ let $$f_L(i) := |\{ v\in L \mid (v,v) = 8 , \delta (L^{(v)}) =
32 -i \} | \mbox{ and } g_L(i):=\frac{f_L(i)}{2i}. $$
\end{definition}

Since $L^{(v)} = L^{(2w)}$ for any root $w\in L^{(v)}$ the function
$g_L$ takes only integer values. By Theorem \ref{deltane},  $g_L(i) = 0$ 
for $i=19,21,22,23,25,\ldots , 31$. 
Using modular forms Koch and Venkov \cite{KV2} prove the following equations for 
$g_L$:
$$\sum _{i=1}^{32} ig_L(i) = 2^53^55^217\cdot 733, \ 
\sum _{i=1}^{32} i^2g_L(i) = 2^{10}3^65^217^2, \ 
\sum _{i=1}^{32} i^3g_L(i) = 2^{14}3^75^217 . $$
They also compute the function $g_L$ for all lattices $L$ with $g_L(32)\neq 0$,
the neighbors $L=L(C)^{(w)}$ of the code-lattices $L(C)$ for one of the 5 
doubly-even self-dual extremal codes $C$.
In my diploma thesis I computed the function $g_L$ for those twelve lattices $L$
with $g_L(24)\neq 0$. 
The function $g_L$ seems to distinguish extremal 32-dimensional lattices.

\section{Lattices and spherical designs.} 
\subsection{Strongly perfect lattices.}\label{secstperf}
Most of the material in this section can be found in Boris Venkov's fundamental 
lecture notes \cite{stperf}. 
In 1977 Delsarte, Goethals, and Seidel \cite{DGS} define
the notion of spherical designs:

\begin{definition}
Let $X\subset S^{n-1}(m) := \{ x\in \R^n \mid (x,x) = m\} $ be some non-empty 
finite set. 
Then $X$ is called a {\bf spherical $t$-design}, if for all polynomials $p\in \R[x_1,\ldots , x_n]$
for degree $\leq t$ 
\begin{equation}\label{Eqspherdef} 
\frac{1}{|X|} \sum _{x\in X} p(x) = \int_{S^{n-1}(m)} p(x) dx .
\end{equation}
\end{definition}

Since the right hand side is the $O(\R^n)$-invariant inner product of $p$ with the constant function and
the homogeneous polynomials of degree $t$ are the orthogonal sum 
$$ \R[x_1,\ldots , x_n] _{t} = \Harm _t \perp Q \Harm _{t-2} \perp  Q^{2} \Harm _{t-4} \perp \ldots $$
the condition \eqref{Eqspherdef} is equivalent to 
\begin{equation}\label{Eqspherharm}
\sum_{x\in X} p(x) = 0 \mbox{ for all non constant harmonic polynomials } p \mbox{ of degree } \leq t .
\end{equation}

In particular Remark \ref{Harmsteut} says that a lattice $L$ is 
strongly eutactic, if and only if its minimal vectors form a spherical $2$-design.
Motivated by this observation Boris Venkov gave the following very fruitful definition.

\begin{definition} 
A lattice $L$ is {\bf strongly perfect} if its minimal vectors  form a 
spherical $4$-design. 
\end{definition}

Strongly perfect lattices provide interesting examples of locally densest lattices
as shown in the following theorem. In contrast to arbitrary extreme lattices, 
they can be classified in small dimensions using the combinatorics of their 
minimal vectors. 

\begin{theorem}
Strongly perfect lattices are strongly eutactic and perfect, so they are extreme.
\end{theorem}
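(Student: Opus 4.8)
The plan is to prove the two structural properties in turn and then invoke the earlier characterisation of extreme lattices. First I would observe that strong eutaxy is essentially free: by definition $\Min(L)$ is a spherical $4$-design, and the design condition \eqref{Eqspherharm} asks that $\sum_{x\in\Min(L)}p(x)=0$ for every non-constant harmonic polynomial of degree $\le 4$, hence in particular for every $p\in\Harm_2$. Thus $\Min(L)$ is automatically a spherical $2$-design, and Remark \ref{Harmsteut} (with Remark \ref{strongeut}) gives that $L$ is strongly eutactic. Since a strongly eutactic lattice is a fortiori eutactic (take all $\lambda_x$ equal), the eutaxy needed below comes for free.

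The substantial work is to establish perfection, i.e.\ that $\langle xx^{tr}\mid x\in\Min(L)\rangle=\R^{n\times n}_{sym}$. As in the earlier proof that perfect lattices are integral, I would argue by duality with respect to the positive definite trace form $(A,B)\mapsto\trace(AB)$: it suffices to show that the only symmetric matrix $A$ with $\trace(A\,xx^{tr})=x^{tr}Ax=0$ for all $x\in\Min(L)$ is $A=0$. Writing $m:=\min(L)$, I would split $A=A_0+\tfrac{\trace(A)}{n}I_n$ into its traceless part $A_0$ and its scalar part, so that $p_0(x):=x^{tr}A_0x$ is harmonic of degree $2$. On minimal vectors $(x,x)=m$, so $x^{tr}Ax=0$ forces $p_0(x)=-\tfrac{\trace(A)}{n}m$, a constant there. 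Applying the $2$-design property to $p_0\in\Harm_2$ yields $0=\sum_{x\in\Min(L)}p_0(x)=-\tfrac{\trace(A)}{n}m\,\abs{\Min(L)}$, whence $\trace(A)=0$; therefore $A=A_0$ and $p_0(x)=x^{tr}Ax$ vanishes on all of $\Min(L)$.

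The key step — and the place where the degree-$4$ hypothesis is genuinely used — is then to feed this information back in through the square $p_0^2$, which is homogeneous of degree $4$. Here the design identity \eqref{Eqspherdef} applies and gives
$$\frac{1}{\abs{\Min(L)}}\sum_{x\in\Min(L)}p_0(x)^2=\int_{S^{n-1}(m)}p_0(x)^2\,dx.$$
The left-hand side is $0$ because $p_0$ vanishes on $\Min(L)$, so the integral of the nonnegative continuous function $p_0^2$ over the sphere is $0$; hence $p_0\equiv 0$ on $S^{n-1}(m)$, and by homogeneity $p_0$ vanishes identically on $\R^n$. As $A$ is symmetric this means $A=0$, so $L$ is perfect. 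I expect this passage from the degree-$2$ vanishing to the degree-$4$ design property via $p_0^2$ to be the main conceptual obstacle; once it is in place the rest is bookkeeping.

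Finally, having shown that $L$ is both perfect and eutactic, I would conclude that $L$ is extreme directly from the characterisation of extremality as perfection together with eutaxy (\cite[Theorem 3.4.6]{Martinet}). This completes the chain strongly perfect $\Rightarrow$ (strongly eutactic and perfect) $\Rightarrow$ extreme.
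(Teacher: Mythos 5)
Your proof is correct and follows essentially the same route as the paper: strong eutaxy comes for free from the $2$-design property, and perfection is obtained by taking $A$ orthogonal to $\langle xx^{tr}\mid x\in\Min(L)\rangle$ under the trace form and applying the $4$-design identity \eqref{Eqspherdef} to the degree-$4$ polynomial $p_A^2$, forcing $\int p_A^2=0$ and hence $A=0$. The only difference is your intermediate decomposition of $A$ into traceless and scalar parts to first kill $\trace(A)$ via the $2$-design property; this is harmless but superfluous, since the paper applies the $4$-design identity directly to $p_A^2$ without needing $p_A$ to be harmonic.
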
 

\begin{proof}
Let $L$ be a strongly perfect lattice.
Then the minimal vectors of $L$ form a spherical 2-design and hence $L$ is strongly 
eutactic by Remark \ref{Harmsteut}. 
 We need to show that $L$ is perfect, i.e. that 
$$\langle x x^{tr} \mid x\in \Min (L) \rangle = \R^{n\times n}_{sym } .$$
Note that any symmetric matrix $A\in \R^{n\times n}_{sym } $ defines a polynomial 
$p_A: \alpha \mapsto \alpha ^{tr} A \alpha $. 
Then $p_{x x^{tr}} (\alpha ) = (x,\alpha )^2$ and $$\trace(xx^{tr} A) = \trace(x^{tr} A x ) = p_A(x) .$$
Assume that 
$A \in \langle x x^{tr} \mid x\in \Min (L) \rangle^{\perp } $. Then  $p_A(x) = 0$ 
for all $x\in \Min (L)$. 
Since $\Min (L)$ is a spherical 4-design we obtain 
$$\int  _{S^{n-1}(\min(L))} p_A^2  = \frac{1}{|X|} \sum _{x\in X } p_A(x)^2 = 0 $$ 
which implies that $p_A=0$ and hence $A=0$.
\end{proof}

\begin{lemma} 
A lattice $L$ is strongly perfect, if and only if there is some constant $c$ 
such that 
$$  \sum _{x\in \Min (L)} (x,\alpha )^4 = c (\alpha , \alpha )^2 \mbox{ for all } \alpha \in \R^n .$$
\end{lemma}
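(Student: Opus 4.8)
The plan is to use the characterization of spherical designs in terms of harmonic polynomials from \eqref{Eqspherharm}: since all minimal vectors lie on the sphere $S^{n-1}(\min(L))$, the set $\Min(L)$ is a spherical $4$-design if and only if $\sum_{x\in\Min(L)} h(x) = 0$ for every harmonic polynomial $h$ of degree $1,2,3,4$. I would prove both implications by relating this condition to the single degree-$4$ identity in the statement.

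For the forward direction I would fix $\alpha\in\R^n$ and expand $p(x) = (x,\alpha)^4$, viewed as a homogeneous polynomial of degree $4$ in $x$, along the orthogonal decomposition $\R[x_1,\dots,x_n]_4 = \Harm_4 \perp Q\Harm_2 \perp Q^2\Harm_0$ recorded just before \eqref{Eqspherharm}, writing $(x,\alpha)^4 = h_4(x) + (x,x)\,h_2(x) + (x,x)^2 h_0$ with $h_i\in\Harm_i$ depending on $\alpha$. Summing over $x\in\Min(L)$, where $(x,x)=\min(L)$ is constant, the strongly perfect (i.e. $4$-design) hypothesis forces $\sum_x h_4(x) = \sum_x h_2(x) = 0$, leaving only the constant term. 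By $O(\R^n)$-invariance $h_0$ is the average of $(x,\alpha)^4$ over the unit sphere, hence a scalar multiple of $(\alpha,\alpha)^2$; this yields $\sum_{x\in\Min(L)} (x,\alpha)^4 = c(\alpha,\alpha)^2$ with $c = \min(L)^2\,|\Min(L)|\cdot\frac{3}{n(n+2)}$.

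For the converse I would verify the vanishing $\sum_x h(x) = 0$ degree by degree. The degrees $1$ and $3$ are free: since $L$ is a lattice, $\Min(L) = -\Min(L)$, so every odd polynomial sums to zero. For degree $2$ I would apply the Laplace operator $\Delta_\alpha$ to both sides of the hypothesis, exactly as in Remark \ref{strongeut}; since $\Delta_\alpha (x,\alpha)^4 = 12(x,x)(x,\alpha)^2$ and $\Delta_\alpha(\alpha,\alpha)^2 = (4n+8)(\alpha,\alpha)$, this gives $\sum_{x\in\Min(L)} (x,\alpha)^2 = c'(\alpha,\alpha)$, i.e. the strongly eutactic condition \eqref{Eqstrongeut}, which by Remark \ref{Harmsteut} is equivalent to $\sum_x h(x)=0$ for all $h\in\Harm_2$.

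The remaining and main step is degree $4$. Here I would compare the two linear functionals $\Phi(p) = \sum_{x\in\Min(L)} p(x)$ and $\Psi(p) = \int_{S^{n-1}(\min(L))} p(x)\,dx$ on the space of homogeneous quartics. Both send $(\cdot,\alpha)^4$ to a scalar multiple of $(\alpha,\alpha)^2$ — the first by hypothesis, the second by the spherical integral computed in the forward direction — and the key fact is that the quartics $\{(\cdot,\alpha)^4 \mid \alpha\in\R^n\}$ span the entire space of homogeneous degree-$4$ polynomials (a standard polarization identity). Consequently $\Phi$ and $\Psi$ are proportional on that space; since $\Psi$ annihilates $\Harm_4$ (positive-degree harmonics integrate to zero on the sphere), so does $\Phi$, giving $\sum_{x\in\Min(L)} h(x) = 0$ for all $h\in\Harm_4$. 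I expect this spanning/proportionality argument to be the crux, the lower degrees being either automatic from the symmetry of $\Min(L)$ or a direct differentiation of the hypothesis.
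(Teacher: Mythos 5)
Your proof is correct and follows essentially the same route as the paper, which does not write out a formal proof but only sketches the argument in the surrounding text: the constant via applying $\Delta_\alpha$ twice, the degree-$2$ condition via applying $\Delta_\alpha$ once, and the odd degrees via antipodality of $\Min(L)$. The one step the paper leaves implicit --- that the fourth powers $(\cdot,\alpha)^4$ span all homogeneous quartics, so the hypothesis forces $\sum_{x\in\Min(L)}h(x)=0$ for $h\in\Harm_4$ --- is exactly what your spanning/proportionality argument supplies, and it is sound.
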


As in Remark \ref{strongeut}, the constant $c$ is obtained by applying the
Laplace operator $\Delta _{\alpha } $ with respect to $\alpha  $ twice, 
$c= \frac{3\min(L)^2}{n(n+2)} |\Min(L)|$.
The  lemma only gives a polynomial condition of degree 4. 
Applying $\Delta _{\alpha }$ one obtains the condition of degree 2 from
Remark \ref{strongeut} that
characterises strongly eutactic lattices. Note that $\sum _{x\in \Min (L)} p(x) = 0$ 
for all homogeneous polynomials $p$ of odd degree since $\Min (L)$ is {\bf antipodal},
$\Min (L) = - \Min (L)$. 
Summarizing we obtain that 
 $L$ is strongly perfect if and only if for all $\alpha \in \R^n$ 
\begin{equation} \label{EqD4D2} 
\begin{array}{lll}
(D4) \ \  &
\sum _{x\in \Min (L)} (x,\alpha )^4  & = \frac{3|\Min (L)|m^2}{n(n+2)}  (\alpha ,\alpha ) ^2 \\
(D2) \ \  &
\sum _{x\in \Min (L)} (x,\alpha )^2  & = \frac{|\Min (L)|m}{n}  (\alpha ,\alpha )  
\end{array} 
\end{equation}

\begin{theorem}\label{minTyp}
Let $L$ be a strongly perfect lattice of dimension $n$. 
\\
Then $\min(L) \min(L^{\#} ) \geq (n+2)/3 .$
\end{theorem}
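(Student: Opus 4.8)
The plan is to exploit the degree-$4$ design identity $(D4)$ by summing it over a cleverly chosen finite set of vectors $\alpha$, namely a set of vectors coming from the dual lattice $L^{\#}$. The key algebraic input is the following: if I apply $(D4)$ with $\alpha = y$ for some fixed $y \in L^{\#}$ and then sum over all $y$ in the set of minimal vectors $\Min(L^{\#})$ of the dual lattice, I obtain
\[
\sum_{y \in \Min(L^{\#})} \sum_{x \in \Min(L)} (x,y)^4 = \frac{3\,|\Min(L)|\,\min(L)^2}{n(n+2)} \sum_{y \in \Min(L^{\#})} (y,y)^2.
\]
The right-hand side is immediate since $(y,y) = \min(L^{\#})$ for all $y \in \Min(L^{\#})$, giving $\frac{3\,|\Min(L)|\,|\Min(L^{\#})|\,\min(L)^2\min(L^{\#})^2}{n(n+2)}$.

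For the left-hand side I would interchange the order of summation and, for each fixed $x \in \Min(L)$, bound $\sum_{y} (x,y)^4$ from below. The crucial observation is the \emph{integrality} of inner products between $L$ and its dual: for $x \in L$ and $y \in L^{\#}$ one has $(x,y) \in \Z$. Since $x$ is a nonzero vector, not all of these integers can vanish (indeed $x \neq 0$ forces some $y \in L^{\#}$ with $(x,y) \neq 0$, and by the design/eutaxy structure one can say more). The plan is to use the degree-$2$ identity $(D2)$ applied in the dual, or rather the strong eutaxy of the relevant configuration, to compute $\sum_{y \in \Min(L^{\#})} (x,y)^2$ exactly for each fixed $x$, and then invoke the inequality $(x,y)^4 \geq (x,y)^2$ valid for every integer $(x,y)$ (since $m^2 \geq m$ for $m \in \Z_{\geq 0}$). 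This elementary integrality bound is what converts the degree-$4$ sum into a degree-$2$ sum and injects the arithmetic of the lattice into the design identity.

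Concretely, I expect $\sum_{y \in \Min(L^{\#})} (x,y)^2$ to be controllable because the minimal vectors of $L^{\#}$, while not necessarily a design themselves, still satisfy a completeness relation once we average appropriately; alternatively one can apply $(D2)$ in the roles reversed. The chain of inequalities then reads
\[
\frac{3\,|\Min(L)|\,|\Min(L^{\#})|\,\min(L)^2\min(L^{\#})^2}{n(n+2)} = \sum_{x,y} (x,y)^4 \geq \sum_{x,y} (x,y)^2,
\]
and after evaluating the degree-$2$ double sum (which should come out to $|\Min(L)|\,|\Min(L^{\#})|\,\min(L)\min(L^{\#})/n$ times a known factor by symmetry of the eutaxy relation in $L$ and $L^{\#}$) and cancelling the common combinatorial factors, I obtain $\min(L)\min(L^{\#}) \geq (n+2)/3$ directly.

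The main obstacle will be making the degree-$2$ evaluation rigorous: unlike $\Min(L)$, the set $\Min(L^{\#})$ need not form a spherical $2$-design, so I cannot simply quote $(D2)$ for the dual. The cleanest route is to keep $\Min(L)$ as the design-carrying set and sum $(D4)$ and $(D2)$ over $y$ ranging in $\Min(L^{\#})$, treating $y$ as the external vector $\alpha$; then the identities $(D4)$ and $(D2)$ hold verbatim for each such $y$ because they are stated for \emph{all} $\alpha \in \R^n$. This sidesteps the need for any design property of the dual and reduces everything to the integrality $(x,y) \in \Z$ together with the two design identities of $L$ itself. Verifying that the constants cancel to leave exactly $(n+2)/3$ is then a routine arithmetic check, and the strict positivity of $\min(L^{\#})$ guarantees the bound is nontrivial.
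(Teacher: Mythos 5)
Your proposal is correct and is essentially the paper's own argument: the paper fixes a single $\alpha\in\Min(L^{\#})$, uses integrality to get $(x,\alpha)^2\bigl((x,\alpha)^2-1\bigr)\geq 0$, and compares $(D4)$ with $(D2)$ to force $\frac{3\min(L)\min(L^{\#})}{n+2}\geq 1$. Your extra summation over all of $\Min(L^{\#})$ is harmless but unnecessary, since the inequality already holds for each fixed dual minimal vector.
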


\begin{proof}
Let $\alpha \in \Min (L^{\#}) $ so $(\alpha,\alpha ) = \min(L^{\#})$.
 Then $(\alpha ,x) \in \Z $ for all $x\in \Min (L) $ and hence 
(D4)-(D2) =  $$
\sum _{x\in \Min(L)} \underbrace{(x,\alpha )^2 ((x,\alpha )^2 -1) }_{\geq 0} 
= \frac{|\Min(L)|\min(L)}{n} (\alpha , \alpha ) 
\underbrace{\big(\frac{3\min(L)(\alpha,\alpha )}{n+2} -1 \big)}_{\Rightarrow \geq 0} $$
Therefore $\frac{3\min(L)\min(L^{\#})}{n+2} \geq 1  $  and the theorem follows.
\end{proof}

\subsection{The classification of strongly perfect lattices} 

The formulas $(D4)$ and $(D2)$ from the last section allow to classify 
strongly perfect lattices of small dimension as well as strongly perfect integral 
lattices of small minimum.

\begin{theorem} \cite[Th\'eor\`eme 6.11]{stperf} 
The strongly perfect root lattices are 
$\A_1$, $\A_2 $, $\D_4$, $\E_6$, $\E_7$, and $\E_8$.
\end{theorem}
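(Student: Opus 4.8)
The plan is to reduce first to orthogonally indecomposable root lattices, then to eliminate the infinite families $\A_n$ and $\D_n$ by the bound of Theorem~\ref{minTyp}, and finally to confirm the six survivors by a representation-theoretic argument. Every root lattice splits as an orthogonal sum $L=R_1\perp\cdots\perp R_s$ of indecomposable root lattices, all of minimum $2$, so $\Min(L)=\bigcup_i\Min(R_i)$ because a vector with two nonzero components has norm $\geq 4$. I would first show that a strongly perfect root lattice is indecomposable. Substituting $\alpha=\beta_1+\beta_2$ with $\beta_i\in\langle R_i\rangle_{\R}$ into condition $(D4)$ of \eqref{EqD4D2}, the left-hand side splits as $\sum_{x\in\Min(R_1)}(x,\beta_1)^4+\sum_{x\in\Min(R_2)}(x,\beta_2)^4$ with no mixed terms, whereas the right-hand side $c((\beta_1,\beta_1)+(\beta_2,\beta_2))^2$ carries a cross term $2c(\beta_1,\beta_1)(\beta_2,\beta_2)$. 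Viewing both sides as polynomials in $\beta_1$ for a fixed nonzero $\beta_2$, the left side has no part of degree $2$ in $\beta_1$ while the right side does, a contradiction; hence $s=1$.

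For the indecomposable case, $(D2)$ holds automatically by Corollary~\ref{rootsteut} and Remark~\ref{strongeut}, so only the $4$-design condition $(D4)$ is at issue. Applying Theorem~\ref{minTyp} with $\min(L)=2$ gives the necessary inequality $2\min(L^{\#})\geq (n+2)/3$. I would then feed in the standard dual minima $\min(\A_n^{\#})=n/(n+1)$, $\min(\D_n^{\#})=1$ for $n\geq 4$, $\min(\E_6^{\#})=4/3$, $\min(\E_7^{\#})=3/2$ and $\min(\E_8^{\#})=2$. The inequality then becomes $2n/(n+1)\geq (n+2)/3$, i.e. $(n-1)(n-2)\leq 0$, forcing $n\leq 2$ in type $\A$, and $2\geq (n+2)/3$, i.e. $n\leq 4$, in type $\D$, while the exceptional lattices all pass (with equality for $\E_6$ and $\E_7$). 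This leaves precisely the candidates $\A_1,\A_2,\D_4,\E_6,\E_7,\E_8$.

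It remains to prove that each of these six is genuinely strongly perfect. Since $\Min(L)$ is antipodal, odd harmonic sums vanish, and the $4$-design property reduces to $\sum_{x\in\Min(L)}p(x)=0$ for all harmonic $p$ of degrees $2$ and $4$; degree $2$ is the strong eutaxy already furnished by Corollary~\ref{rootsteut}. The functional $p\mapsto\sum_{x\in\Min(L)}p(x)$ on $\Harm_4$ is $\Aut(L)$-invariant because $\Aut(L)$ permutes $\Min(L)$, hence vanishes as soon as $\Harm_4$ contains no trivial $\Aut(L)$-constituent. Decomposing degree-$4$ polynomials equivariantly as $\Harm_4\perp Q\Harm_2\perp Q^2\Harm_0$, and using irreducibility to see $(\Harm_2)^{\Aut(L)}=0$, the dimension of $\Aut(L)$-invariants in $\Harm_4$ equals the number of fundamental invariants of $\Aut(L)$ of degree $4$. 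I would compute these degrees for the \emph{full} automorphism groups: $W(G_2)$ for $\A_2$ with degrees $2,6$, $W(F_4)$ for $\D_4$ with degrees $2,6,8,12$, and the Weyl groups of $\E_6,\E_7,\E_8$ (whose degrees, and those of the remaining automorphisms, avoid $4$), with $\A_1$ trivial since $\Harm_t=0$ for $t\geq 2$ in dimension one. In every case no fundamental invariant has degree $4$, so $(\Harm_4)^{\Aut(L)}=0$ and $\Min(L)$ is a $4$-design.

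The step I expect to be the main obstacle is this last one for $\D_4$. Its Weyl group $W(\D_4)$ has degrees $2,4,4,6$ and therefore does carry harmonic invariants of degree $4$, so the argument fails verbatim; it succeeds only once one recognises that $\Aut(\D_4)$ is strictly larger, the triality symmetry enlarging it to $W(F_4)$, whose degree-$4$ invariant disappears. Correctly identifying the full automorphism groups (triality for $\D_4$, the diagram automorphism for $\E_6$) and setting up the invariant-functional criterion is the delicate part; by comparison the reduction to indecomposables and the elimination of the infinite families are routine once the dual minima are recorded.
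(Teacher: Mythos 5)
Your proposal is correct, and in fact the paper offers no proof to compare it with: the theorem is quoted from \cite[Th\'eor\`eme 6.11]{stperf} as a black box. Your argument is a legitimate self-contained proof assembled from ingredients the survey does supply. The reduction to indecomposable summands (no $(\beta_1,\beta_1)(\beta_2,\beta_2)$ cross term on the left of $(D4)$) is sound; the elimination of $\A_n$ for $n\geq 3$ and $\D_n$ for $n\geq 5$ via Theorem~\ref{minTyp} with the dual minima $n/(n+1)$, $1$, $4/3$, $3/2$, $2$ is numerically correct, with equality in all six surviving cases except $\E_8$; and the invariant-theoretic verification is right, including the one genuinely delicate point, which you correctly isolate: $W(\D_4)$ has two fundamental invariants of degree $4$, so one must pass to $\Aut(\D_4)\cong W(F_4)$ with degrees $2,6,8,12$. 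Two small remarks. First, for $\A_2$, $\E_6$, $\E_7$, $\E_8$ the Weyl group already suffices (degrees $2,3$ for $S_3$ acting on $\A_2$; none of the $\E_n$ degrees is $4$), so you need the full automorphism group only for $\D_4$; and for $\E_6$ the group generated by $W(\E_6)$ and $-1$ is not itself a reflection group, so it is cleaner to argue $\Harm_4^{\Aut(L)}\subseteq\Harm_4^{W}=0$ than to speak of its ``fundamental degrees''. Second, Venkov's own route to the necessity half evaluates $(D4)-(D2)$ at a root $\alpha\in\Min(L)$, where $(x,\alpha)\in\{0,\pm1,\pm2\}$, yielding the single relation $(n+2)(h+6)=12h$ for the Coxeter number $h$; this is equivalent to your computation but avoids importing the minima of the dual lattices. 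Both versions are correct; yours has the advantage of reusing Theorem~\ref{minTyp} verbatim.
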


\begin{theorem} \cite[Th\'eor\`eme 7.4]{stperf} \label{min3}
The strongly perfect integral lattices of minimum $3$ are 
$O_1,O_7,O_{16},O_{22},O_{23} .$
\end{theorem}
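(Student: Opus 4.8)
The plan is to classify these lattices by combining the dual-minimum bound of Theorem~\ref{minTyp} with the integrality of the inner products $(x,\alpha)$ for $x\in\Min(L)$ and $\alpha\in L^{\#}$, and feeding this arithmetic into the two design identities $(D4)$ and $(D2)$ of \eqref{EqD4D2}. First I would bound the dimension. Let $L$ be strongly perfect and integral with $\min(L)=3$. Since $L$ is integral we have $L\subseteq L^{\#}$, hence $\min(L^{\#})\leq\min(L)=3$. Theorem~\ref{minTyp} then gives $\frac{n+2}{3}\leq\min(L)\min(L^{\#})\leq 9$, so $n\leq 25$, and moreover $\min(L^{\#})\geq\frac{n+2}{9}$. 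Thus the dual minimum $\gamma:=\min(L^{\#})$ is squeezed into the interval $[\frac{n+2}{9},\,3]$ and the problem is reduced to finitely many dimensions.

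The combinatorial engine comes from testing the design identities against dual minimal vectors. Fix $\alpha\in\Min(L^{\#})$ with $(\alpha,\alpha)=\gamma$. For every $x\in\Min(L)$ the number $(x,\alpha)$ is an integer, and Cauchy--Schwarz gives $(x,\alpha)^2\leq(x,x)(\alpha,\alpha)=3\gamma\leq 9$, so $(x,\alpha)\in\{0,\pm1,\pm2,\pm3\}$. Writing $n_j:=|\{x\in\Min(L):(x,\alpha)=j\}|$, antipodality gives $n_j=n_{-j}$, and together with the zeroth moment $\sum_j n_j=|\Min(L)|$ and the specialisations of $(D2)$ and $(D4)$ at $\alpha$,
\[
\sum_j j^2 n_j = \frac{3|\Min(L)|}{n}\gamma
\quad \mbox{and} \quad
\sum_j j^4 n_j = \frac{27|\Min(L)|}{n(n+2)}\gamma^2 ,
\]
one obtains three linear relations among the finitely many nonnegative integers $n_0,n_1,n_2,n_3$. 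The identity $(D4)-(D2)$ used in the proof of Theorem~\ref{minTyp} shows that $|(x,\alpha)|\leq 1$ for all $x$ exactly when $\gamma=\frac{n+2}{9}$; more generally, forming $\sum_{x}((x,\alpha)^2-a)((x,\alpha)^2-b)\geq 0$ for integers $a,b$ with $(t-a)(t-b)\geq 0$ on $t\in\{0,1,4,9\}$ converts the three moments into sharp inequalities on $\gamma$ and on the $n_j$.

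Then I would run the case analysis over $1\leq n\leq 25$. For each dimension, the requirement that the moment system admit a nonnegative integer solution with $(x,\alpha)^2\leq 3\gamma$ — and that its low moments be the common values above for \emph{every} $\alpha\in\Min(L^{\#})$ — is extremely restrictive: it forces $\gamma$ to take one of a few rational values and confines the inner-product profile $(n_0,n_1,n_2,n_3)$ to a short list. Most dimensions are eliminated because no such profile exists, and the survivors are $n=1,7,16,22,23$, with $O_7=\sqrt{2}\,\E_7^{\#}$ and $O_{22},O_{23}$ appearing in connection with the Leech lattice.

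The main obstacle is the final identification step. Knowing $n$, $|\Min(L)|$, $\gamma$, and the inner-product profile of $\Min(L)\cup\Min(L^{\#})$ pins down a combinatorial configuration, but upgrading this to a \emph{uniqueness} statement for the lattice itself requires more than the degree-$\leq 4$ design identities, which only control the first few moments. Here one must exploit the rigidity of the dual configuration and realise $L$ (or a related even overlattice) inside a classified family — for the larger dimensions as a section of the Leech lattice $\Lambda_{24}$ — so that uniqueness follows from the known classification of those lattices. This geometric reconstruction, rather than the moment bookkeeping, is the delicate part of the argument.
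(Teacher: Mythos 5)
The paper does not actually prove this theorem: it is quoted from Venkov's lecture notes \cite[Th\'eor\`eme 7.4]{stperf}, and the surrounding text only sketches the general method (finite list of pairs $(s,\gamma)$, the identities $(D2)$ and $(D4)$ tested against vectors of $L^{\#}$, then case-by-case identification). Your setup is exactly that method, and the pieces you do carry out are correct: $L\subseteq L^{\#}$ gives $\min(L^{\#})\leq 3$, Theorem~\ref{minTyp} gives $n\leq 25$ and $\min(L^{\#})\geq (n+2)/9$, and the moment equations for $n_j$ at $\alpha\in\Min(L^{\#})$ are the right specialisations of \eqref{EqD4D2}. So the approach is the intended one.

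However, as a proof there are two genuine gaps. First, the elimination mechanism you describe is too weak as stated: with $\alpha\in\Min(L^{\#})$ you have four unknowns $n_0,n_1,n_2,n_3$ and only three moment equations, so the system has a free parameter and nonnegative integer solutions exist for many pairs $(n,\gamma)$; ``no profile exists'' will not by itself kill twenty dimensions. The actual argument must range $\alpha$ over minimal representatives of \emph{all} classes of $L^{\#}/L$ (not just $\Min(L^{\#})$), extract divisibility and parity constraints on $\gamma$ and on $s=\tfrac12|\Min(L)|$, compare $s$ with kissing-number bounds, and in several dimensions use the structure of the sublattice generated by $\Min(L)$ --- exactly the kind of extra input the paper illustrates in Section~\ref{tight} for tight $7$-designs (minimal class representatives, characteristic vectors, forcing unimodularity). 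None of this appears in your plan beyond the single quadratic inequality $\sum_x((x,\alpha)^2-a)((x,\alpha)^2-b)\geq 0$. Second, the identification step, which you correctly flag as the delicate part, is left entirely open: knowing $n$, $s$, $\gamma$ and an inner-product profile does not determine a lattice, and pinning down $O_{23}$ as the unique unimodular lattice of minimum $3$ in dimension $23$, or $O_{16}$ as the Barnes--Wall overlattice $\langle \Lambda_{16},x\rangle$, requires importing the relevant classification results (shadow theory, the characterisation of $\Lambda_{16}$), not just moment bookkeeping. So what you have is a faithful outline of the strategy rather than a proof; the content of Venkov's Th\'eor\`eme 7.4 lives precisely in the two steps you defer.
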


The lattices $O_n$ of dimension $n$ are as follows
 $O_7 = \sqrt{2} \E_7^{\# }$,
$O_{16} = \langle \Lambda _{16} , x \rangle $, where
$\Lambda _{16}$ is  the Barnes-Wall lattice in dimension 16 and
$x\in \Lambda _{16} ^{\#} $ satisfies $(x,x)=3$.
$O_{23}$ is the unique unimodular lattice of minimum 3 and dimension 23,
$O_{22} = x^{\perp } $ for any minimal vector $x\in O_{23} $.

These classifications have been extended by J. Martinet  \cite[pp 135-146]{Mart} 
to integral lattices of higher minimum
by imposing stronger design conditions on the minimal vectors.

The strongly perfect lattices up to dimension 12 are all classified
(\cite{stperf}, \cite{dim10}, \cite{dim12}). 
It is believed that the lattices given in \cite[Tableau 19.1 and 19.2]{stperf} 
are the only strongly perfect lattices up to dimension 24.
In higher dimensions, the classifications get more and more involved.
To simplify them one might either impose stronger design conditions
(see for instance \cite{6des}) on the lattice or extra conditions on 
the dual lattice. Motivated by the fact that for most of the known 
strongly perfect lattices also the dual lattice is strongly perfect, 
we gave the following definition.

\begin{definition}
A lattice $L$ is called 
{\bf dual strongly perfect} if $L$ and $L^{\#}$ are both strongly perfect lattices.
\end{definition}

One method to show that a lattice $L$ is strongly perfect is to
use its automorphism group $G=\Aut(L)$.
If this group has no harmonic
invariant of degree $\leq 4$, then all  $G$-orbits are
spherical 4-designs (see Section \ref{rep}) and hence the lattice 
is strongly perfect. 
Since $\Aut(L) = \Aut(L^{\#}) $ such lattices are also
dual strongly perfect. A similar argument applies to lattices 
$L$ which are strongly perfect, because their harmonic theta series 
$\theta _{L,p} = 0$ for all harmonic $p$ of degree $2$ and $4$ (see Section \ref{ModForm}). 
The theta transformation formula then shows that also $\theta _{L^{\#},p} = 0$ 
and hence also the dual lattice is strongly perfect. 
In \cite{dim14} we showed that there is a unique dual strongly perfect lattice
 of dimension 14. 
The general method to classify all strongly perfect lattices in
a given dimension usually starts with a finite list of possible
pairs $(s,\gamma )$, where $s = s(L) = \frac{1}{2} |\Min (L) |$ is half
of the kissing number of $L$ and
$$\gamma  = \gamma '(L)^2 = \min (L) \min (L^{\#})$$
 the Berg\'e-Martinet invariant of $L$.
For both quantities there are good upper bounds known (\cite{Elkies}).
Note that $\gamma $ is just the product of the values of the
Hermite function on $L$ and $L^{\#}$.
Using the general equations \eqref{EqD4D2} of Section \ref{secstperf}
a case by case analysis allows either to exclude certain of the
possibilities $(s,\gamma )$ or to factor $\gamma = m \cdot r $ such that
rescaled to minimum $\min (L^{\#} ) = m$, the lattice $L^{\#} $ is integral
(or even) and in particular contained in its dual lattice $L$ (which is
then of minimum $r$).
For dual strongly perfect lattices we can use a similar argumentation
to obtain a finite list of possibilities $(s',\gamma )$ for
$s' = s(L^{\#} )$ and in each case a factorization $\gamma = m' \cdot r' $
such that $L$ is integral (or even) if rescaled to $\min (L) = m' $.
This gives the exponent (in the latter scaling)
$\exp (L^{\#} /L)  = \frac{m}{r'} $. We proceed 
either by a direct classification of all such lattices
$L$ or  use  modular forms to exclude
the existence of a modular form
$\theta _{L}$ of level $\frac{m}{r'} $ and weight $\frac{n}{2}$
starting with
$1+2sq^{m'/2} + \ldots $, such that its image under the Fricke involution starts
with $1+2s'q^{m/2} + \ldots $ and both $q$-expansions have non-negative
integral coefficients.
The classification of dual strongly perfect lattices up to dimension 17
is a PhD project of my student Elisabeth Nossek co-supervised by 
Boris Venkov.

\subsection{Application of group representations.} \label{rep}

Besides providing combinatorial tools for the
classification of certain locally densest lattices, the 
notion of  strongly perfect lattices opens to apply representation 
theory of finite groups but also the theory of modular forms (Section \ref{ModFormext})
to prove that certain lattices are extreme. 

Similar to Proposition \ref{irredeut} one shows the following Lemma.

\begin{lemma}
Let $G\leq \Aut (L)$ and assume that all homogeneous $G$-invariant polynomials
of degree $4$ are multiples of $Q^2$. 
Then $L$ is dual strongly perfect.
\end{lemma}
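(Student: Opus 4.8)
The plan is to verify that both $\Min(L)$ and $\Min(L^{\#})$ are spherical $4$-designs; since $\Aut(L)=\Aut(L^{\#})$ contains $G$, one and the same argument handles both lattices. By the harmonic characterisation \eqref{Eqspherharm} it suffices to show that $\sum_{x\in X}p(x)=0$ for every non-constant harmonic polynomial $p$ of degree $\leq 4$, where $X$ denotes $\Min(L)$ or $\Min(L^{\#})$. The odd degrees require no work: both sets of minimal vectors are antipodal, so $\sum_{x\in X}p(x)=0$ whenever $p$ is homogeneous of odd degree. Thus only degrees $2$ and $4$ remain.

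The central device is an averaging argument. Because $G\leq\Aut(L)=\Aut(L^{\#})\subset O(\R^n)$, the set $X$ is stable under $G$, so for each $t$ the linear functional $\Phi_t\colon p\mapsto\sum_{x\in X}p(x)$ on the degree-$t$ homogeneous polynomials is $G$-invariant. Writing $\rho(p):=\frac{1}{|G|}\sum_{g\in G}(g\cdot p)$ for the Reynolds operator, $G$-invariance of $\Phi_t$ gives $\Phi_t(p)=\Phi_t(\rho(p))$, and $\rho(p)$ is $G$-invariant. Since $G$ acts orthogonally, its action commutes with the Laplace operator $\Delta=\sum_{i=1}^n\partial^2/\partial x_i^2$ and hence preserves $\Harm_t$; so if $p\in\Harm_t$ then $\rho(p)\in\Harm_t$ is a $G$-invariant harmonic polynomial. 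It therefore suffices to prove that every $G$-invariant harmonic polynomial of degree $2$ or $4$ vanishes.

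I would settle the two degrees as follows. In degree $4$, the hypothesis forces a $G$-invariant $\rho(p)$ to be a scalar multiple of $Q^2$; but $\Delta(Q^2)=2(n+2)Q\neq 0$, so $Q^2$ is not harmonic and the only harmonic multiple of $Q^2$ is $0$. Hence $\rho(p)=0$ and $\Phi_4(p)=0$. In degree $2$, note first that $Q$ is itself $G$-invariant, so if $q\in\Harm_2$ is $G$-invariant then $qQ$ is a $G$-invariant degree-$4$ polynomial, whence $qQ=cQ^2$ for some $c\in\R$; as the polynomial ring is a domain this gives $q=cQ$, and applying $\Delta$ yields $0=\Delta q=c\,\Delta Q=cn$, so $q=0$ and $\Phi_2(p)=0$. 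Combining the two degrees with the antipodal vanishing in odd degrees shows that $X$ is a spherical $4$-design for $X=\Min(L)$ and for $X=\Min(L^{\#})$; thus both $L$ and $L^{\#}$ are strongly perfect, i.e. $L$ is dual strongly perfect.

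The argument involves no genuine obstacle: the only points needing care are the elementary facts that the orthogonal $G$-action commutes with $\Delta$, so that averaging preserves harmonicity, and that no nonzero harmonic polynomial is a multiple of $Q$ or $Q^2$. Both are immediate, so the proof is a clean transfer from invariant theory to the design condition, exactly parallel to the eutaxy argument of Proposition \ref{irredeut}.
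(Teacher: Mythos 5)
Your proof is correct and follows essentially the route the paper intends (the paper gives no written proof, saying only that the lemma is shown ``similar to Proposition \ref{irredeut}''): you use $G$-invariance to reduce the design condition to the vanishing of $G$-invariant harmonic polynomials of degree $\le 4$, which is exactly the invariant-theoretic content of that proposition extended to degree $4$, applied to both $\Min(L)$ and $\Min(L^{\#})$ via $\Aut(L)=\Aut(L^{\#})$. The only cosmetic difference is that the paper's template applies the hypothesis directly to the invariant quartic $\alpha\mapsto\sum_{x\in\Min(L)}(x,\alpha)^4$ to obtain condition (D4), whereas you average the test polynomial with the Reynolds operator; your explicit deduction of the degree-$2$ case from the degree-$4$ hypothesis is a detail the paper leaves implicit.
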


Together with Venkov we tried to apply this to obtain the minimum of 
the Thompson-Smith lattice of dimension 248:
Let $G = $Th be the sporadic simple Thompson group.
Then $G$ has a 248-dimensional rational representation $\rho : 
G \to O(\R^{248}) $.
Since $G$ is finite, $\rho (G)$ fixes a lattice
$L\leq \Q^{248}$.
 Modular representation theory tells us that for all
primes $p$ the $\F_pG$-module $L/pL$ is simple.
 Therefore $L=L^{\#} $ and $L$ is even
(otherwise the even sublattice $L_0$ of $L$ provides an $\F_2G$-submodule 
$L_0/2L < L/2L $). 
From the character table of $G$ one obtains that the space of $G$-invariant
homogeneous polynomials of
degree $2d$ is spanned by $Q^d$ for 
$d=1,2,3 $.
So all layers of $L$ form spherical 6-designs and in particular $L$
is strongly perfect.
Theorem  \ref{minTyp}  implies that 
$\min (L) \min(L^{\#} ) = \min(L) ^2 \geq \frac{248+2}{3}  > 83  $, so
$\min (L) \geq 10$.
Constructing the lattice $L$ one finds a vector 
 $v\in L$ with $Q(v) = 6$, so
$\min (L) \in \{ 10,12 \}$.

\begin{cor}
The minimum of the Thompson-Smith lattice is either $10$ or $12$.
\end{cor}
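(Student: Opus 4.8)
The plan is to realise the Thompson-Smith lattice $L$ explicitly enough to pin its minimum between two bounds: a lower bound $\min(L)\geq 10$ produced by the strong perfection of $L$ together with Theorem \ref{minTyp}, and an upper bound $\min(L)\leq 12$ coming from a single short lattice vector. The representation theory of $G=\mathrm{Th}$ does all the structural work, and the only genuinely computational input is the exhibition of that short vector.

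First I would fix a $G$-invariant lattice. Averaging the standard form over $G$ yields a $G$-invariant positive definite inner product, and for any lattice $L_0$ the sum $\sum_{g\in G} gL_0$ is a full $G$-stable lattice $L\leq\Q^{248}$; I rescale so that $L$ is primitive. The decisive input is that for every prime $p$ the reduction $L/pL$ is a simple $\F_pG$-module. From this I would deduce that $L$ is unimodular: the simplicity of every $L/pL$ rules out proper nonzero $G$-stable subquotients, which via the duality of $L$ and $L^{\#}$ forces $L^{\#}=L$ after the natural scaling, the alternative $L^{\#}=\frac{1}{p}L$ being excluded by primitivity. Evenness follows in the same spirit: were $L$ odd, its even sublattice $L_0$ would give a proper nonzero $G$-submodule $L_0/2L<L/2L$, contradicting simplicity at $p=2$.

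Next I would prove that $L$ is strongly perfect using its automorphism group and the Lemma of Section \ref{rep}. Reading the multiplicity of the trivial character inside the symmetric powers of the $248$-dimensional character off the character table of $\mathrm{Th}$, one finds that the space of $G$-invariant homogeneous polynomials of degree $2d$ is one-dimensional, spanned by $Q^d$, for $d=1,2,3$. Hence there is no nonconstant harmonic $G$-invariant in these even degrees; degree $1$ is excluded by irreducibility of $\R^{248}$, and the odd degrees contribute nothing to $\Min(L)$ because $\Min(L)$ is antipodal. By the design criterion \eqref{Eqspherharm} every layer of $L$, in particular $\Min(L)$, is then a spherical $6$-design, so $L$ is strongly perfect. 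Since $L=L^{\#}$, Theorem \ref{minTyp} gives $\min(L)^2=\min(L)\min(L^{\#})\geq(248+2)/3=250/3>83$, whence $\min(L)\geq 10$; and as $L$ is even, $\min(L)$ is an even integer.

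It remains to bound $\min(L)$ from above, and here I would descend to an explicit model of $L$: starting from matrix generators of $\rho(\mathrm{Th})$ in a convenient basis, I construct the invariant lattice and locate a vector $v\in L$ with $Q(v)=6$, i.e.\ $(v,v)=12$. Together with evenness and $\min(L)\geq 10$ this forces $\min(L)\in\{10,12\}$. The main obstacle is precisely this last step: the structural arguments are clean, but producing usable generators for the $248$-dimensional representation and certifying a genuinely short vector in such a high-dimensional lattice is the hard, computational heart of the matter, and it is exactly what prevents one from deciding between $10$ and $12$ outright.
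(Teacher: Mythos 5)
Your proposal follows the paper's argument essentially verbatim: simplicity of $L/pL$ for all $p$ forces $L=L^{\#}$ and $L$ even, the character table of $\mathrm{Th}$ shows the invariants of degree $2d$ are spanned by $Q^d$ for $d=1,2,3$ so all layers are $6$-designs and $L$ is strongly perfect, Theorem \ref{minTyp} then gives $\min(L)^2\geq 250/3$ hence $\min(L)\geq 10$, and an explicitly constructed vector of norm $12$ supplies the upper bound. The extra detail you give on averaging to obtain the invariant lattice and on excluding $L^{\#}=\frac{1}{p}L$ is a correct fleshing-out of what the paper asserts in one line, not a different route.
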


\section{Unimodular lattices} \label{ModForm} 

\subsection{Extremal even unimodular lattices are extreme.}\label{ModFormext}

Boris Venkov was the first who used 
the theory of modular forms to 
study designs supported by extremal even unimodular lattices
(see \cite{ExtDes}, \cite[Chapter 7, Theorem 23]{SPLAG}).
This was generalized to extremal modular lattices (in the sense of
Quebbemann \cite{Quebbemann}) by Bachoc and Venkov 
in \cite[pp 87-111]{Mart}.

\begin{theorem}
Let $L$ be an extremal even unimodular lattice of dimension $n=24a+8b$
with $b=0,1,2$. Then 
 all nonempty layers $A_j(L)$ are ($11-4b$)-designs.
\end{theorem}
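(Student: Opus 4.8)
The plan is to test the design property one layer at a time, using Hecke's harmonic theta series (Theorem \ref{Hecke}) together with the vanishing of modular forms of negative weight. By the characterisation \eqref{Eqspherharm}, a nonempty layer $A_j(L)$ is a $t$-design precisely when $\sum_{x \in A_j(L)} p(x) = 0$ for every non-constant harmonic polynomial $p$ of degree $\le t$. Since each layer is antipodal, $A_j(L) = -A_j(L)$, every harmonic polynomial of odd degree contributes a zero sum automatically; so I only need to control the even degrees $t' = 2, 4, 6, \ldots$

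First I fix a harmonic polynomial $p \in \Harm_{t'}$ of even degree $t'$ and assemble all the layer sums into the single generating series $\theta_{L,p} = \sum_j \big(\sum_{x \in A_j(L)} p(x)\big) q^j$. By Theorem \ref{Hecke} this is a modular form of weight $n/2 + t'$ for $\SL_2(\Z)$, and since $p$ is non-constant it is in fact a cusp form. The key input is extremality: by Corollary \ref{extremalbound} the smallest nonzero norm is $\min(L) = 2 + 2a$ with $a = \lfloor n/24 \rfloor$, so every layer $A_j$ with $0 < j < a+1$ is empty while $p(0) = 0$. Hence the $q$-expansion of $\theta_{L,p}$ begins no earlier than $q^{a+1}$; that is, it vanishes to order at least $a+1$ at the cusp.

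Next I divide out the discriminant form. Because $\Delta$ has a simple zero at the cusp and no zeros on $\HH$, the quotient $\theta_{L,p} / \Delta^{a+1}$ is holomorphic on $\HH$ and at the cusp, hence a genuine modular form. Writing $n = 24a + 8b$, so that $n/2 = 12a + 4b$, its weight is $n/2 + t' - 12(a+1) = 4b + t' - 12$. Whenever this weight is negative the quotient, and therefore $\theta_{L,p}$ itself, must vanish identically; negativity holds exactly when $t' < 12 - 4b$, that is, for every even $t' \le 10 - 4b$. For each such $p$ every layer sum $\sum_{x \in A_j(L)} p(x)$ is zero.

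Combining the two cases---odd degrees settled by antipodality, even degrees $\le 10 - 4b$ settled by the weight argument---every non-constant harmonic polynomial of degree $\le 11 - 4b$ has vanishing sum on each layer, which is exactly the assertion that all nonempty $A_j(L)$ are $(11-4b)$-designs. I expect the only delicate point to be the bookkeeping of the vanishing order: it is extremality, not mere unimodularity, that forces $\theta_{L,p}$ to be divisible by $\Delta^{a+1}$, and this is precisely what pushes the residual weight $4b + t' - 12$ below zero for the claimed range of $t'$. The threshold is sharp, since at the next even degree $t' = 12 - 4b$ the residual weight is $0$ and $\mathcal{M}_0 = \C$ no longer forces vanishing.
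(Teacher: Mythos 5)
Your argument is correct and follows essentially the same route as the paper: apply Hecke's theorem to get $\theta_{L,p}\in\mathcal{M}_{n/2+t}(\SL_2(\Z))$, use extremality to force vanishing to order $a+1$ at the cusp so that $\theta_{L,p}\in\Delta^{a+1}\mathcal{M}_{4b+t-12}$, and conclude from the vanishing of modular forms of negative weight. The only cosmetic difference is that you dispose of odd degrees by antipodality, whereas the paper's single divisibility statement already covers them (odd weight forms for $\SL_2(\Z)$ vanish); your closing remark on the sharpness of the threshold at $t'=12-4b$ is a correct bonus observation.
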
 

\begin{proof}
Since $L$ is extremal, its minimum is $2a+2$. Let $p\in \Harm _t$ be 
a harmonic polynomial of degree $t\geq 1$. Then 
$$\theta _{L,p} = \sum _{j=a+1}^{\infty } (\sum _{\ell \in A_j(L)} p(\ell ) ) q^j 
\in \Delta ^{a+1} {\mathcal M}_{4b-12+t} $$
Therefore $\theta _{L,p} = 0$ whenever $4b+t  < 12$, hence all layers 
$A_j(L) =\{ \ell \in L \mid Q(\ell ) = j \} $ form spherical ($11-4b$)-designs.
\end{proof}

\begin{cor}
 If $b=0$ or $b=1$ then $L$ is strongly perfect and hence extreme.
\end{cor}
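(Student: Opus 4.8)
The plan is to observe that this corollary is an immediate consequence of the theorem just established, combined with the earlier result that strongly perfect lattices are extreme. First I would recall the definition at hand: a lattice $L$ is strongly perfect precisely when its set of minimal vectors $\Min(L)$ forms a spherical $4$-design. Thus the entire task reduces to exhibiting $\Min(L)$ as a spherical $4$-design.

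Next I would identify $\Min(L)$ with one of the layers $A_j(L)$. Since $L$ is extremal of dimension $n = 24a + 8b$, its minimum is $\min(L) = 2a+2$, so the minimal vectors are exactly those $\ell$ with $Q(\ell) = a+1$; that is, $\Min(L) = A_{a+1}(L)$, which is a nonempty layer. The preceding theorem then applies directly: every nonempty layer, and in particular $A_{a+1}(L)$, is a spherical $(11-4b)$-design.

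The key numerical check is that $11 - 4b \geq 4$ in the two cases at hand. For $b = 0$ the layers are $11$-designs and for $b = 1$ they are $7$-designs; in both cases the design strength is at least $4$, and a spherical $t$-design is a fortiori a spherical $4$-design whenever $t \geq 4$. Hence $\Min(L)$ is a spherical $4$-design and $L$ is strongly perfect. This also makes transparent why $b = 2$ must be excluded: there the theorem only yields $3$-designs, falling just short of the strength required for strong perfection.

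Finally I would invoke the earlier theorem stating that strongly perfect lattices are strongly eutactic and perfect, hence extreme, to conclude that $L$ is extreme. The argument presents no genuine obstacle; the only point requiring a little care is the bookkeeping that correctly matches the layer index of the minimal vectors to the extremal minimum $2a+2$, so that the design property supplied by the theorem is applied to $\Min(L)$ itself rather than to some higher layer.
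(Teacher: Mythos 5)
Your proposal is correct and is exactly the argument the paper intends: since $\Min(L)=A_{a+1}(L)$ is a nonempty layer, the theorem makes it an $(11-4b)$-design, which for $b=0,1$ has strength at least $4$, so $L$ is strongly perfect and hence extreme by the earlier theorem. The paper leaves this as an immediate consequence without written proof, and your bookkeeping of the layer index and the numerical check $11-4b\geq 4$ fills in precisely the intended details.
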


 In particular all extremal even unimodular lattices of dimension $32$
are extreme.
O. King \cite{King} has shown that there are more than $10$ million  such lattices.
A complete classification is unknown and the theory of strongly perfect lattices
is the only known method to prove that all these lattices provide local 
maxima of the density function.

\subsection{Odd unimodular lattices and their shadow.} 

The theta series of an odd unimodular lattice is only a modular form for a 
subgroup of index 3 of the full modular group.
The upper bound on the minimum of an odd unimodular lattice $L\leq \R^n$ 
obtained by  the theory of modular forms in the same way as for even lattices
in Corollary \ref{extremalbound} is $$\min (L) \leq \lfloor \frac{n}{8} \rfloor + 1 .
$$
The only unimodular lattices where equality is achieved are 
${\bf Z}^n $ $(n=1,\ldots, 7$), $\E _8$, $\D_{12}^+$, $(\E_7\perp \E_7)^+$,
$\A_{15}^+$, $O_{23}$ and $\Lambda _{24}$ (see \cite[Chapter 19]{SPLAG}). 

Any odd unimodular lattice $L$ contains its {\bf even sublattice}
\begin{equation} \label{Eqevensub} 
L_0 := \{ \ell \in L \mid (\ell ,\ell ) \in 2\Z \} 
\end{equation} 
of index 2. The theta series of $L_0$ is 
$\theta _{L_0} = \frac{1}{2} (\theta _L(z) + \theta _L(z+1) ) $ and 
also $\theta _{L_0^{\#} } $ is obtained from $\theta _L$ using the 
theta transformation formula. 

\begin{definition}
Let $L$ be an odd unimodular lattice. Then
the {\bf shadow} of $L$ is
$S(L) := L_0^{\#} \setminus L $.
\end{definition}

Note that $S(L)$ is not a lattice but the union of the two cosets $\neq L$ of 
$L_0$ in $L_0^{\#}$. The theta series of $S(L)$ is obtained from the 
theta series of $L$ as 
$$\theta _{S(L)} (z) = S(\theta _L(z)) = (\frac{i}{z})^{n/2} 
\theta _{L}(-\frac{1}{z} +1 ) .$$
Using the fact that $S(\theta _L)$ also has non-negative integer coefficients 
Rains and Sloane \cite{RS} prove the following  theorem.

\begin{theorem} 
Let $L\leq \R^n$ be an odd unimodular lattice.
Then $\min (L) \leq  2+ 2\lfloor \frac{n}{24} \rfloor $ except for $n=23$, 
where this bound is $3$.
\end{theorem}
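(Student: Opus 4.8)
The plan is to pin down $\theta_L$ as a modular form and then exploit the non-negativity of the shadow coefficients, which is exactly the extra arithmetic input that improves the naive bound $\min(L)\le \lfloor n/8\rfloor +1$ to the stated one. Since $L$ is odd unimodular, $\theta_L$ is invariant under $z\mapsto z+2$ and, because $L=L^{\#}$, essentially fixed by $z\mapsto -1/z$, so $\theta_L$ is a modular form of weight $n/2$ for the theta group $\Gamma_\theta=\langle z\mapsto z+2,\ z\mapsto -1/z\rangle$ of index $3$ in $\SL_2(\Z)$. First I would recall that the graded ring of modular forms for $\Gamma_\theta$ is generated by two algebraically independent forms, a weight-$\tfrac12$ generator coming from $\theta_{\Z}$ and a weight-$4$ cusp form; consequently the weight-$n/2$ forms with constant term $1$ make up an affine space whose dimension is governed by $\lfloor n/8\rfloor$. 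Choosing an echelon basis adapted to the leading power of $q=\exp(\pi i z)$ (so that $q^{k}$ records the vectors of norm $k$) produces a unique \emph{extremal} candidate $\theta^{(n)}$ whose first $\lfloor n/8\rfloor$ coefficients after the constant term vanish; this alone yields only the weaker bound $\min(L)\le \lfloor n/8\rfloor+1$.

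Second, I would bring in the shadow. The inclusions $L_0\le L,L_0^{\#}$ give $\theta_{L_0}=\tfrac12(\theta_L(z)+\theta_L(z+1))$ and hence an explicit expression for $\theta_{S(L)}=\theta_{L_0^{\#}}-\theta_L$; equivalently one uses the transformation formula recorded above, $\theta_{S(L)}(z)=(i/z)^{n/2}\theta_L(-1/z+1)$. The decisive point is that $S(L)$ is a genuine union of cosets of $L_0$, so its theta series $\theta_{S(L)}=\sum_k s_k q^{k}$ has \emph{non-negative} integer coefficients, and these are a fixed linear image (under the $S$-transformation) of the coefficients of $\theta_L$. Thus every admissible $\theta_L$ must simultaneously have non-negative integer coefficients in its own $q$-expansion \emph{and} in the $q$-expansion of its shadow.

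Third comes the sign analysis, which I expect to be the heart of the matter. Assume for contradiction that $\min(L)\ge 3+2\lfloor n/24\rfloor$. Forcing this many further initial coefficients of $\theta_L$ to vanish pins $\theta_L$ down to the extremal form $\theta^{(n)}$ (up to the few remaining free parameters). I would then compute the induced $\theta_{S(L)}$ by applying the $S$-transformation to $\theta^{(n)}$ and track the coefficient of its lowest relevant power of $q$. The extremal form for $\Gamma_\theta$ can be matched against the extremal form for $\SL_2(\Z)$ built from $E_4$ and $\Delta$, so the shadow coefficients become explicit in the same Eisenstein and cusp-form data; the claim then reduces to the assertion that the leading shadow coefficient turns \emph{negative} as soon as $\min(L)$ exceeds $2+2\lfloor n/24\rfloor$. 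Establishing this negativity — a uniform estimate in $n$ on the lowest shadow coefficient of the extremal form — is the main obstacle, and the one step that needs genuine modular-forms input rather than bookkeeping.

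Finally I would isolate the residue classes where the generic argument degenerates. The bound $2+2\lfloor n/24\rfloor$ is governed by $n\bmod 24$, and $n=23$ is genuinely exceptional: there the shadow obstruction does not activate before minimum $3$, and the value is realized by $O_{23}$. I would therefore treat $n=23$ by hand, and likewise check the small dimensions listed above, where $\Z^n$, $\E_8$, $\D_{12}^{+}$, $(\E_7\perp\E_7)^{+}$, $\A_{15}^{+}$, $O_{23}$, and $\Lambda_{24}$ already meet the naive bound, confirming that outside these cases the sharpened inequality holds.
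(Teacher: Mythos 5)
Your strategy is the right one, and it is exactly the one the paper points to: the paper does not prove this theorem itself but attributes it to Rains and Sloane, recording only the key input you also identify, namely that $\theta_{S(L)}$ must have non-negative integer coefficients on top of the usual constraints on $\theta_L$. So there is no divergence of method to discuss. The issue is that your write-up stops at the decisive step. You reduce everything to the assertion that ``the leading shadow coefficient turns negative as soon as $\min(L)$ exceeds $2+2\lfloor n/24\rfloor$'' and then explicitly flag establishing this as ``the main obstacle.'' That obstacle \emph{is} the theorem; without it you have only rederived the weaker bound $\min(L)\le\lfloor n/8\rfloor+1$ that the paper already states separately.

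Two more concrete points on why the remaining step is not mere bookkeeping. First, assuming $\min(L)\ge 3+2\lfloor n/24\rfloor$ does \emph{not} pin $\theta_L$ down to a single extremal form: writing $\theta_L=\sum_{j=0}^{\lfloor n/8\rfloor}a_j\,\theta_{\Z}^{\,n-8j}\Delta_8^{\,j}$, the vanishing of roughly $2+2\lfloor n/24\rfloor$ initial coefficients fixes only the $a_j$ with small $j$, leaving on the order of $\lfloor n/8\rfloor-2\lfloor n/24\rfloor$ free parameters for large $n$. The Rains--Sloane argument therefore cannot be a single sign check on one distinguished form; it is an infeasibility argument for the whole system of non-negativity constraints on $\theta_L$ \emph{and} on its shadow (whose $q$-expansion begins at exponent governed by the largest $j$ with $a_j\neq 0$, since the $j$-th term of the shadow starts at $q^{(n-8j)/4}$ up to normalization). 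Second, the exceptional case $n=23$ is not something you can dispose of ``by hand'' independently of this analysis: it is precisely the dimension where the constraint system remains feasible at minimum $3$, and that has to fall out of the same computation rather than be checked against a list of known lattices. As it stands, the proposal is a correct plan with the proof of its central inequality missing.
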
 

A similar result holds for odd modular lattices. 

Any $v\in S(L)$ satisfies $(v,\ell ) \equiv Q(\ell ) \pmod{\Z } $, 
so $2 v$ is a {\bf characteristic vector} of $L$. 
By the theory of quadratic forms, the norm $(2v,2v)\equiv n  \pmod{8} $.
Define $\sigma (L) := 4 \min (S(L)) $ to be the minimal norm of a characteristic
vector in $L$.

Elkies proved that $\Z ^n$ is the only unimodular lattice
$L$  with $\sigma (L ) = n$.
Any unimodular lattice $L$ can be written uniquely as  $L=M\perp \Z^k$ 
with $M=M^{\#}$ of minimum $\geq 2$. Then $\sigma (L) = \sigma (M) + k$,
so one may assume that $\min (L) \geq 2$. 
Then Elkies
found the short list of lattices
$L$ of minimum $\geq 2$ with  $\sigma  (L ) = n-8$.
The largest possible dimension here is $n=23$ where the lattice
$L$ is the shorter Leech lattice $O_{23}$.
In \cite{shadow} 
we adapt the theory of
theta series with spherical coefficients to the shadow
theory of unimodular lattices
to study lattices $L$ with $\sigma (L) = n-16$.
If $\min (L) \geq 3$ then $n\leq 46$.
This bound is the best possible, because
$L = O_{23} \perp O_{23} $ satisfies $\dim (L) = 46$ and
$\sigma (L) = 46 -16 $ and this is the only such
lattice of dimension 46. In dimension 45 and 44 there are no such lattices of
minimum $\geq 3$. 

The combination of the minimum of the lattice and its shadow motivated Bachoc and
Gaborit to 
define  $s$-extremal lattices and codes.
Gaborit showed that a unimodular lattice $L\leq \R^n$ always satisfies 
$ 8 \min (L) +\sigma (L) \leq 8 + n $ unless $n=23 $ and $L=O_{23}$. 
Lattices achieving this bound are called {\bf s-extremal}. 
This notion has been generalized
 to modular lattices.

\subsection{Classification of odd unimodular lattices.} 

All unimodular lattices are classified up to dimension 25 \cite[Chapter 16,17]{SPLAG}.
Borcherds also 
showed that there is a unique unimodular lattice in dimension 26 
without roots. In higher dimensions the mass formula shows that 
there are too many unimodular lattices to classify them all. 
 Roland Bacher and Boris Venkov \cite[pp 212-267]{Mart}
 developed tools to classify only those odd unimodular lattices of dimension 
27 and 28 that have minimum $\geq 3$. They show that there are 3 such lattices
in dimension 27 and 38 such lattices in dimension 28. 
The correctness of their classification has later been also verified by 
the mass formulas in \cite{King}. 

Bacher's and Venkov's method of classification is as follows:
Let $L$ be a unimodular lattice of dimension 28 with minimum 3.
By the work by Elkies on lattices with long shadows, mentioned above,
$\sigma (L) $ is either 4 or 12.
This determines the two possibilities for the 
theta series of $L$. In particular $L$ contains vectors 
$v$ of norm $(v,v) = 4$. 
Each such vector $v\in L$ defines a neighbor $L^{(v)}  = \Z \perp M$ 
for some unimodular lattice $M$ of dimension 27 and of minimum $\geq 2$.
It can be shown that the root system of $M$ is $k\A _1$ and that 
there exists $v\in L$ such that $k\leq 4$. 
So it is enough to classify the 27-dimensional lattices $M $ with 
root system $k\A_1$ for $k=0,1,2,3,4$ and then construct $L$ as a 
neighbor of $M\perp \Z$. 
A clever counting argument using the symmetry of the neighboring graph 
makes the computation feasible.

\subsection{An application to coding theory.} 

The Bacher  Venkov classification of the unimodular lattices in dimension 28
without roots has a very nice application to the classification 
of extremal self-dual ternary codes of length 28 using Remark \ref{constructionA}. 
The paper \cite{HMV} shows that there are exactly 6931 such codes.
They correspond to pairs $(L,F)$ of 28-dimensional unimodular lattices
of minimum 3 and a 3-frame $F=\{v_1,\ldots ,v_{28} \} \subset L$. 
Since the minimum of the codes is $9$ one obtains a 
bijection between the set of equivalence classes of extremal self-dual ternary codes 
of length 28 and the set of pairs 
$ (L,F) $ of isometry classes of unimodular lattices $L$ of minimum 3 and 
representatives of the $\Aut(L)$-orbits of $3$-frames $F\subset L$.
Again theoretical arguments are needed to enable the enumeration of 
all frames with the computer.

\section{Tight spherical designs.} \label{tight}

The most interesting $t$-designs are those
of minimal cardinality. They have been studied by Bannai shortly after their definition 
in \cite{DGS}: 
If $t=2m$ is even, then any spherical $t$-design $X\subset S^{n-1}$  satisfies
$$|X| \geq {{n-1+m}\choose{m}} + 
{{n-2+m}\choose{m-1}} $$ and if 
$t=2m+1$ is odd then 
$$|X| \geq 2 {{n-1+m}\choose{m}} .$$ 
A $t$-design $X$ for which equality holds is called 
a {\bf tight} $t$-design.

Tight $t$-designs in $\R^n$ with $n\geq 3$ are very rare,
they only exist if $t\leq 5$ and for $t=7,11$. 
The unique tight 11-design is supported by the minimal vectors 
$\Min (\Lambda _{24})$ of the Leech lattice. 
The tight $t$-designs with $t=1,2,3$  are  also 
completely classified whereas  their classification for 
$t=4,5,7$ is still an open problem. 
It is known that the existence of a tight $4$-design in dimension 
$n-1$ is equivalent to the existence of a tight $5$-design in
dimension $n$, so the open cases are $t=5$ and $t=7$. 
It is also well known that tight spherical $t$-designs $X$ for odd
values of $t$ are antipodal, i.e. $X=-X$ (see \cite{DGS}).

There are certain numerical conditions on the dimension of such tight
designs. 
A tight $5$-design $X \subset S^{n-1}$ can only exist if 
either $n=3$ and $X$ is the set of 12 vertices of a regular 
icosahedron or
$n=(2m+1)^2-2$ for an integer $m$.
Existence is only known for $m=1,2$ and these designs are unique and 
given by the minimal vectors of $\E_7^{\#}$ resp. $M_{23}^{\#} [2] $
from \cite[Tableau 19.2]{stperf}.
Using lattices,  
Bannai, Munemasa and Venkov exclude the next two open cases $m=3,4$ as well as 
an infinity of other values of $m$ in the paper \cite{BMV}.

There are similar results for tight 7-designs. 
Such designs only exist if $n=3d^2-4$. The only known 
cases are $d=2,3$ and the corresponding designs are unique; 
they are given by the minimal vectors of the unimodular lattices $\E_8$ and 
$O_{23}$. 
The paper \cite{BMV} excludes the cases $d=4,5$ and also gives 
partial results on the interesting case $d=6$ which still
remains open.
The study of such designs in dimension $104$
 was part of our joint projects during Boris Venkov's last 
weeks in October and November 2011 in Aachen. 
Just to illustrate the connection with lattices a few arguments from \cite{BMV}
are recalled. 
So let $D = X \disj -X \subset S^{n-1}(d)$ be a tight spherical $7$-design
where $n=3d^2-4$, $d\in \N$. Then 
$$|X| = \frac{n(n+1)(n+2)}{6} \mbox{ and } (x,y) \in 0,\pm 1 \mbox{ for all } 
x\neq y \in X .$$
Let $L:=\langle X \rangle $. Then $L$ is an integral lattice of dimension $n$.
The design conditions (equation (D2) and (D4) from Section \ref{secstperf} and
the analogous equation (D6)) yield linear equations on the cardinalities
$$\begin{array}{ll} 
n_k(\alpha ) & := | \{ x\in X \mid (x,\alpha ) = \pm k \} | , (\alpha \in \R^n, k\geq 0) \\
\sum n_k(\alpha ) &  = |X| =  (1/2) (3d^2-4)(3d^2-2)(d^2-1)  \\
\sum k^2 n_k(\alpha ) &  = (1/2) (3d^2-2)(d^2-1)d (\alpha,\alpha ) \\
\sum k^4 n_k(\alpha ) &  = (3/2) (d^2-1)d^2 (\alpha,\alpha )^2 \\
\sum k^6 n_k(\alpha ) &  = (5/2) (d^2-1) d (\alpha,\alpha )^3. \end{array}$$
Assume that $\alpha \in \Min(L) \setminus D$.
Then $(\alpha ,x) \in \{ 0,\pm 1,\ldots , \pm \lfloor \frac{d}{2} \rfloor \} $ 
for all $x\in X$. 
In particular for $d\leq 7$ the $n_k(\alpha )$ are uniquely determined by 
the 4 equations. In all cases one obtains $n_2(\alpha ) < 0$ which is absurd. 
So $\min (L) = d$ and $D=\Min (L)$. 
Now let $\alpha \in L^{\#} $ be minimal in its class modulo $L$.
Again $(\alpha ,x) \in \{ 0,\pm 1,\ldots , \pm \lfloor \frac{d}{2} \rfloor \} $ 
for all $x\in X$. 
For $d =4$ and $ 5$ the system is overdetermined and one should find
$(\alpha ,\alpha )$ as rational root of the polynomial that determines 
$n_3(\alpha )$. But this polynomial has no non-zero rational roots.
Therefore $\alpha = 0$ and $L=L^{\#}$ is unimodular.
For $d=4$ this immediately yields a contradiction since then $L$ is 
even unimodular of dimension 44, which is not a multiple of 8. 
The case $d=5$ is more tricky. Here Venkov takes $\alpha \in L$ to be a 
characteristic vector of minimal norm. 
Then $(x,\alpha ) \in \{ \pm 1,\pm 3, \pm 5 \} $ again yields an 
overdetermined system on the $n_k(\alpha )$. 
One obtains a polynomial equation for $(\alpha ,\alpha )$ that has no 
rational solution. A contradiction. 

If $k$ is odd then $(k^2-1)(k^2-9)(k^2-25)$ is a multiple of $2^{10}3^25$.
This yields divisibility conditions on the norm of a characteristic vector.
In \cite{preprint} we show that 
 tight 7-designs with $d$ odd may only exist if $d\equiv \pm 1 \pmod{16}$ 
or $d\equiv \pm 3\pmod{32}$.

\section{Hecke operators.} 

In the previous section we have seen that one may apply 
modular forms, spherical designs and codes to construct and investigate
interesting lattices. 
We also saw application of lattices to the classification
of codes and tight designs. 
This final section reports on Venkov's ideas to apply the Kneser 2-neighbor 
graph of the Niemeier lattices to construct the action of 
certain Hecke operators on the space of Siegel modular forms spanned by
theta series \cite{Siemod12}. 
This has later been applied to other genera of modular lattices, 
including the genus of the Barnes-Wall lattice from \cite{Scharlau}, 
but also to genera of Hermitian lattices to construct Siegel cusp forms
as linear combinations of Siegel theta series. 
The transfer of this method to codes allowed me to define 
Hecke operators in coding theory which was an old 
question by Brou\'e. 

Let $L_1,\ldots , L_{24}$ represent the isometry classes of even
unimodular lattices in dimension 24. 
The Kneser 2-neighbor graph for these lattices has been computed by 
Borcherds for the purpose of classifying
odd unimodular lattices in dimension 24 (see \cite[Chapter 17]{SPLAG}).
The adjacency matrix 
$$K \in \Z ^{24\times 24} , K_{ij}:= |\{ M \mid M\cong L_j, [L_i : M\cap L_i] = 2 \}| $$
defines the action of a Hecke operator on the Siegel theta series (see
work of Yoshida and Walling).
The operator $K$ acts on the complex vector space 
$$V:=\{ \sum _{i=1}^{24} a_i [L_i] \mid a_i \in \C \} \cong \C^{24} $$
of formal linear combinations of the
Niemeier lattices. Taking the {\bf Siegel theta series} defines a linear mapping 
$$\Theta ^{(d)} : V \to {\mathcal M}_{12}(\Sp_{2d}(\Z)) , 
\sum _{i=1}^{24} a_i [L_i] \mapsto \sum _{i=1}^{24} a_i \Theta ^{(d)}(L_i) .$$
Let $V_d:=\ker(\Theta ^{(d)})$ be the kernel of $\Theta ^{(d)}$,
i.e. those linear combinations of lattices which have trivial
degree-$d$ Siegel theta series.
Then we get the filtration
\begin{equation} \label{Eqfilt}
V=:V_{-1} \supseteq V_0 \supseteq V_1 \supseteq \ldots \supseteq V_{m} = \{ 0 \}.
\end{equation}
The space $V$ has a natural positive definite Hermitian inner product 
defined by 
\begin{equation} \label{Eqscalp}
 \langle [ \Gamma ] , [ \Lambda ] \rangle :=
(\# \Aut(\Gamma ) ) \delta _{[ \Gamma ], [ \Lambda ] }.
\end{equation}
Let $Y_d:=V_{d-1} \cap V_d^{\perp}  $.
The space $S_d$ of degree-$d$ Siegel cusp forms that are linear combinations of 
Siegel theta series is then isomorphic to $Y_d\cong V_{d-1}/V_d $.
This yields the  orthogonal
decomposition 
\begin{equation} \label{Eqdec}
V = \bigoplus _{d =0}^m Y_d . \end{equation}
The purpose of \cite{Siemod12} is to compute this decomposition and 
therewith the spaces $S_d$.
The Kneser neighbor operator $K$ is self-adjoint with respect to the 
inner product \eqref{Eqscalp}. It respects the filtration \eqref{Eqfilt}
and hence also the decomposition 
\eqref{Eqdec} and therefore each space $Y_d$ has a basis consisting of 
eigenvectors of $K$. 
It turns out that $K$ has a simple spectrum, so it remains for each eigenvector
$e_1,\ldots ,e_{24}$ of $K$ to compute the number $d=w(i)$ such that $e_i\in Y_d$. 
This is a difficult problem which could not be solved completely. 
By computing some non-zero coefficient of $e_i$ one can always obtain 
upper bounds on $w(i)$. 
One important tool is the definition of an associative 
and commutative multiplication $\circ $ on $V$ for which the dual filtration of \eqref{Eqfilt}
behaves well, i.e. $V_n^{\perp}  \circ V_d^{\perp}  \subseteq V_{n+d}^{\perp}  $
for all $n,d$. 
The starting point was the cusp form $e_{24} \in Y_{12} $ constructed by 
Borcherds, Freitag, and Weissauer. 
We computed 
$e_i \circ e_j = A_{ij} e_{24} + \sum _{l=1}^{23} b_{ij}^l e_l $
with a non zero coefficient $A_{ij}$ for certain pairs $i,j$. 
This gave us the lower bound $w(i)+w(j) \geq 12$ which allowed us to 
obtain exact values for $w(i)$ and $w(j)$.
We could determine all $w(i)$ apart from one open conjecture which 
involves to prove that a certain linear combination of degree 9 Siegel 
theta series of weight 12 vanishes.

\end{document}